\tikzset{commutative diagrams/diagrams={ampersand replacement=\&}}
\definecolor{darkblue}{rgb}{0.0, 0.0, 0.8}
\definecolor{darkred}{rgb}{0.8, 0.0, 0.0}
\definecolor{darkgreen}{rgb}{0.0, 0.8, 0.0}
 \theoremstyle{definition}
 \newtheorem{theorem}{Theorem}[section] 
 \newtheorem*{theorem*}{Theorem}
 \newtheorem{lemma}[theorem]{Lemma}
 \newtheorem{corollary}[theorem]{Corollary}
 \newtheorem{proposition}[theorem]{Proposition}
 \newtheorem{example}[theorem]{Example}
 \newtheorem{remark}[theorem]{Remark}
 \newtheorem{definition}[theorem]{Definition}
\crefname{figure}{Fig.}{Figs.}
\crefname{equation}{Eqn.}{Eqns.}
\crefname{theorem}{Thm.}{Thms.}
\crefname{proposition}{Prop.}{Props.}
\crefname{section}{Sec.}{Secs.}
\crefname{lemma}{Lem.}{Lems.}
\crefname{definition}{Defn.}{Defns.}
\crefname{corollary}{Cor.}{Cors.}
\crefname{remark}{Rem.}{Rems.}
\crefname{example}{Ex.}{Exs.}
\newcommand{\K}{\mathbb{K}}
\newcommand{\R}{\mathbb{R}}
\newcommand{\Z}{\mathbb{Z}}
\newcommand{\e}{\mathbf{\varepsilon}}
\renewcommand{\AA}{\mathcal{A}}
\newcommand{\FF}{\mathcal{F}}
\newcommand{\PP}{\mathcal{P}}
\newcommand{\Kfunc}{K}
\newcommand{\Cfunc}{C}
\newcommand{\Mfunc}{M}
\newcommand{\Nfunc}{N}
\newcommand{\Vect}{\mathbf{Vect}}
\newcommand{\vZero}{\textbf{0}}
\newcommand{\transition}[2]{\varphi^{#2}_{#1}}
\newcommand{\barc}{B}
\newcommand{\id}{\mathrm{Id}}
\newcommand{\field}{\mathbb{K}}
\newcommand{\TODO}[3]{\hbox to 0pt{\textcolor{#1}{$^\bullet$}}\marginpar{\footnotesize \textcolor{#1}{\begin{flushleft}#2: #3\end{flushleft}}}}
\title{On the Hausdorff stability of barcodes over posets}
 \author{Mujtaba Ali, Tom Needham, Anastasios Stefanou, Ling Zhou}
\begin{document}

\maketitle
\begin{abstract}
The Isometry Theorem of Chazal et al.~and Lesnick is a fundamental result in persistence theory, which states that the interleaving distance between two one-parameter persistence modules is equal to the bottleneck distance between their barcodes. Significant effort has been devoted to extending this result to modules defined over more general posets. As these modules do not generally admit nice decompositions, one must restrict attention to the class of interval-decomposable modules in order to define an appropriate notion of bottleneck distance. Even with this assumption, it is known that bottleneck distance may not be equivalent to interleaving distance, but that it is Lipschitz stable under certain, fairly restrictive, assumptions. In this paper, we consider the more basic question of stability of the Hausdorff distance with respect to interleaving distance for interval-decomposable modules. Our main theorem is a Lipschitz stability result, which holds in a fairly general setting of interval-decomposable modules over arbitrary posets, where intervals are assumed to be taken from any family satisfying certain closure conditions. Along the way, we develop some new tools and results for interval-decomposable modules over arbitrary posets, in the form of geometrically-flavored characterizations of the existence of morphisms and interleavings between interval modules.

\end{abstract}

\section{Introduction}
 
The interleaving distance plays a central role in topological data analysis, because it provides a canonical and functorial way to compare persistence modules arising from data, such as persistent homology modules~\cite{chazal2009proximity,carlsson2009topology}. 
Unlike distances defined directly on barcodes or other discrete invariants of modules, the interleaving distance is defined at the level of persistence modules themselves and is therefore independent of any chosen decomposition. This makes it applicable in full generality, in particular in multiparameter settings where complete discrete invariants are unavailable. Most importantly, the interleaving distance is tightly linked to stability: persistent homology is \(1\)-Lipschitz with respect to natural perturbations of the input data, such as the \(\ell^\infty\)-distance on filtrations, and the interleaving distance on the resulting persistence modules. For this reason, the interleaving distance is widely regarded as the theoretically correct notion of distance in persistence theory and serves as a benchmark against which the stability of other invariants and metrics is measured.

Despite these conceptual advantages, the interleaving distance is not an ideal metric on barcodes from a computational perspective. In particular, computing the interleaving distance is NP-hard even when restricted to interval-decomposable modules \cite{bjerkevik2019computing}. 
Nevertheless, any alternative metric on barcodes that is used in practice should remain compatible with interleaving-distance--based stability results. In particular, any meaningful metric on barcodes arising from the persistent homology of multifiltrations should be Lipschitz stable with respect to the interleaving distance.

This requirement follows from the optimality of the interleaving distance \cite{lesnick2015theory}. First, persistent homology is \(1\)-Lipschitz with respect to the \(\ell^\infty\)-distance on filtrations and the interleaving distance on homology modules. Second, any other distance on persistence modules enjoying this $1$-Lipschitz stability property with persistence homology must necessarily be a lower bound for the interleaving distance. Hence, while alternative barcode metrics may offer improved computational tractability, they can only be stable in a meaningful sense if they are controlled by the interleaving distance.

From this perspective, a central open problem in multiparameter persistence is the identification of a distance on barcodes that is polynomial-time computable from the input data and Lipschitz stable with respect to the interleaving distance. The bottleneck distance is known to be equal to the interleaving distance in the one-parameter setting~\cite{lesnick2015theory}, but its generalization to modules defined over more general posets requires one to restrict attention to interval-decomposable modules (as modules defined over arbitrary posets do not necessarily admit interval decompositions; see~\cite{botnan2022introduction}). Even working under this assumption, the generalized bottleneck distance (see Defn.~\ref{def:bottleneck}) is known to be unstable with respect to the interleaving distance for modules defined over the poset $\R^n$, when $n \geq 2$ \cite{botnan2018algebraic}. 
However, two prominent conjectures address the possibility of recovering stability under additional structural assumptions on the intervals. 
The \emph{Botnan--Lesnick conjecture} predicts Lipschitz stability of the barcode for interval-decomposable \(\mathbb{R}^n\)-modules whose intervals are geometrically convex  \cite{botnan2018algebraic,bjerkevik2016stability}, while the \emph{Bjerkevik conjecture} \cite{bjerkevik2025stabilizing} proposes the same conclusion when the intervals are required to be uppersets. We observe that both conjectures concern important classes of \(\mathcal F\)-decomposable modules, where \(\mathcal F\) denotes a chosen family of intervals closed under finite intersections.

Motivated by the issues and conjectures described above, we consider a more basic question. In the setting of interval-decomposable modules, one can define a generalized notion of Hausdorff distance between modules, which can be viewed as a relaxation of the bottleneck distance (see Defn.~\ref{def:bottleneck}). The main result of our paper is as follows. The statement uses $d_\mathrm{H}$ for Hausdorff distance and $d_\mathrm{I}$ for interleaving distance, which requires the notion of an \emph{flow} to define over a general poset---see Sec.~\ref{sec:flows}.

\begin{theorem*}
Let $(\PP,\Omega)$ be a poset equipped with a $\R$-flow and let $\FF$ be a family of intervals in $\PP$ that contains the empty interval, is closed under intersections, and is closed under the action of $\Omega$. 
Let $\Mfunc,\Nfunc:\PP\to\Vect_K$ be two interval-decomposable $\PP$-modules with barcodes $\barc(\Mfunc),\barc(\Nfunc)\subset \FF$. 
Then
\[d_\mathrm{H}(\Mfunc,\Nfunc)\leq 2\cdot d_\mathrm{I}(\Mfunc,\Nfunc).\]
\end{theorem*}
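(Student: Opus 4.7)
The plan is to exploit the interval decompositions of $\Mfunc$ and $\Nfunc$ to factor an $\epsilon$-interleaving into components indexed by pairs $(I,J)\in\barc(\Mfunc)\times\barc(\Nfunc)$ and then, for each $I$, extract a specific $J$ for which $\field_I$ and $\field_J$ are $2\epsilon$-interleaved. Throughout, fix $\epsilon > d_\mathrm{I}(\Mfunc,\Nfunc)$ and an $\epsilon$-interleaving pair $(\phi,\psi)$. By symmetry, given $I \in \barc(\Mfunc)$, it suffices to produce $J \in \barc(\Nfunc)$ with $d_\mathrm{I}(\field_I,\field_J) \leq 2\epsilon$.

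If the internal $2\epsilon$-shift morphism on $\field_I$ (induced by the $\R$-flow) vanishes, then $\field_I$ is $\epsilon$-interleaved with the zero module, so $J = \emptyset \in \FF$ works since $\field_\emptyset = 0$. Otherwise, precompose $\phi$ with the inclusion $\field_I \hookrightarrow \Mfunc$ and postcompose the shifted $\psi$ with the projection $\Mfunc^{2\epsilon} \twoheadrightarrow \field_I^{2\epsilon}$. The interleaving identity, restricted to the $(I,I)$-summand, yields
\[
\mathrm{sh}^{2\epsilon}_{\field_I} \;=\; \sum_{J \in \barc(\Nfunc)} \psi_{J,I}^{\epsilon} \circ \phi_{I,J},
\]
where $\phi_{I,J}\colon \field_I \to \field_J^\epsilon$ and $\psi_{J,I}\colon \field_J \to \field_I^\epsilon$ are the components of $\phi$ and $\psi$ along the respective interval summands. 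Since the left-hand side is nonzero, some summand must be nonzero, forcing both $\phi_{I,J}$ and $\psi_{J,I}$ to be nonzero for some $J \in \barc(\Nfunc)$.

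Next, invoke the geometric characterization of the existence of nonzero morphisms between interval modules---one of the preliminary results promised in the abstract---to convert the nontriviality of $\phi_{I,J}$ and $\psi_{J,I}$ into explicit order-theoretic relations among $I$, $J$, and their $\Omega_\epsilon$-thickenings in $\PP$. The assumption that $\FF$ is closed under intersection and under the $\Omega$-action guarantees that every intermediate interval arising in this analysis (such as those of the form $I \cap \Omega_\epsilon^{-1}(J)$ and its variants) remains in $\FF$. Feed these relations into the companion geometric characterization of $2\epsilon$-interleavings between interval modules to conclude that $\field_I$ and $\field_J$ are indeed $2\epsilon$-interleaved, giving $d_\mathrm{I}(\field_I,\field_J) \leq 2\epsilon$.

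The crux of the argument is the passage from ``component morphisms exist in both directions'' to ``an interleaving exists.'' Nonzero $\phi_{I,J}$ and $\psi_{J,I}$ only guarantee that $\psi_{J,I}^{\epsilon} \circ \phi_{I,J}$ is a nonzero scalar multiple of the $2\epsilon$-shift on $\field_I$; the reverse composite $\phi_{I,J}^{\epsilon} \circ \psi_{J,I}\colon \field_J \to \field_J^{2\epsilon}$ need not equal the $2\epsilon$-shift on $\field_J$, and may even vanish. Hence one cannot obtain an interleaving pair simply by rescaling $\phi_{I,J}$ and $\psi_{J,I}$. The proposed remedy is to discard the original components and, using the geometric containment data, assemble \emph{canonical} interleaving morphisms between $\field_I$ and $\field_J$ whose compositions are automatically the prescribed shifts; this is precisely where the closure of $\FF$ under intersection becomes essential, and where the factor of $2$ enters, since the intermediate intervals underlying the canonical morphisms live at the combined thickening by both $\Omega_\epsilon$ actions rather than a single one.
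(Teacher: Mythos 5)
Your overall strategy matches the paper's: decompose the interleaving, extract a summand $J$ with $\psi_{J,I}^{\epsilon}\circ\phi_{I,J}\neq 0$, convert to geometric containment data, and reassemble an interleaving. And you correctly isolate the real obstacle: nonzero $\phi_{I,J}$ and $\psi_{J,I}$ give a \emph{one-sided} (left) interleaving of $C(I)$ and $C(J)$, and the reverse composite $\phi_{I,J}^{\epsilon}\circ\psi_{J,I}$ may vanish. However, your proposed remedy does not actually close this gap, and a related case split is set at the wrong threshold.

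The gap is the following. ``Discarding the original components and assembling canonical interleaving morphisms'' only re-derives the left interleaving with scalar $1$; it does nothing to supply the missing geometric condition on the $J$ side. Concretely, the geometric characterization of a full $\epsilon$-interleaving (Prop.~\ref{prop:geometric condition for interleaving}) requires, beyond the two nonzero morphisms and the containment $\varnothing\neq I\cap I(2\epsilon)\subset J(\epsilon)$, the symmetric containment $\varnothing\neq J\cap J(2\epsilon)\subset I(\epsilon)$. This fourth condition is \emph{not} a consequence of the left interleaving, no matter how you normalize the morphisms, and there is no step in your outline that produces it. The paper supplies it via Prop.~\ref{prop:convex-symmetry}: assuming $I$ is $4\epsilon$-significant, one composes the left-interleaving maps four times to obtain
\[
g(3\epsilon)\circ f(2\epsilon)\circ g(\epsilon)\circ f \;=\; \varphi^{4\epsilon}_{C(I)}\neq \mathbf{0},
\]
whence the middle composite $f(2\epsilon)\circ g(\epsilon)$ is nonzero, and Prop.~\ref{prop:nonzero composition} then gives exactly the missing containment $\varnothing\neq J\cap J(2\epsilon)\subset I(\epsilon)$. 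This quadruple-composition trick is the crux of the argument, and it is absent from your proposal.

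Relatedly, your case split on $2\epsilon$-triviality of $I$ is too weak. The upgrade argument just described requires $I$ to be $4\epsilon$-significant, so the dichotomy must be ``either $I$ is $4\epsilon$-trivial (and then $C(I)$ is $2\epsilon$-interleaved with the zero module), or $I$ is $4\epsilon$-significant (and then the four-fold composition applies).'' If you only split on $2\epsilon$-triviality, the intervals that are $2\epsilon$-significant but $4\epsilon$-trivial fall through the cracks: for those, your remaining steps give only a left $\epsilon$-interleaving with some $J$, with no way to finish. Fixing both the threshold and supplying the $4\epsilon$-significance argument (Prop.~\ref{prop:convex-symmetry}) would bring your outline into line with the paper's proof.
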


To contextualize this result, let us compare to a theorem of Bjerkevik~\cite{bakke2021stability}, which says that if $\FF$ is the family of rectangles in $\R^n$ and $B(M),B(N) \subset \FF$, then $d_\mathrm{B}(M,N) \leq (2n-1) \cdot d_\mathrm{I}(M,N)$, where $d_\mathrm{B}$ denotes bottleneck distance. While our result uses the weaker metric $d_\mathrm{H}$, it applies to more general posets and families of intervals, and involves an absolute (e.g., dimension-independent) Lipschitz constant. 

A main motivation for the work in this paper was to develop an approach to the Botnan-Lesnick and Bjerkevik conjectures described above. The difficulty in proving these conjectures stems from the inherently global nature of the bottleneck distance. Stability requires constructing matchings between entire intervals that are coherent across all parameters, rather than merely pointwise or locally. 
By contrast, local notions such as the \emph{matching distance}~\cite{cerri2013betti,kerber2018exact} on fiber barcodes are more tractable, since they reduce to one-parameter persistence by restricting to lines of positive slope in \(\R^n\), where canonical optimal matchings are available. However, for such local matchings to induce a global matching of interval decompositions—and hence a bottleneck bound—one must be able to lift a compatible family of matchings across all fibers to a single global matching of intervals. This coherence problem is highly nontrivial and represents a fundamental obstruction to establishing bottleneck stability with respect to the interleaving distance. From this perspective, establishing Hausdorff stability for interval-decomposable modules over general posets offers a more flexible and geometrically natural alternative. 

In the one-parameter case, all modules are intersection-closed, and the local equality of Hausdorff and bottleneck distances was first observed by Chazal et al.~\cite{chazal2009proximity}.
This local equivalence plays a central role in the seminal stability result of Chazal et al., where bottleneck stability is obtained by combining Hausdorff control with the fact that the interleaving distance defines a geodesic metric on the space of persistence modules \cite{chazal2009proximity}. In future work, we plan to apply our main theorem to extend this approach to attack bottleneck stability results for more general poset modules.

Finally, besides being of  theoretical interest, establishing stability results for barcodes of convex-interval-decomposable modules and for upper-set-decomposable modules  is particularly important because of concrete applications. For instance, the articles~\cite{xin2023gril,dey2025quasi} recently introduced the notion of \emph{worms}
---barcodes whose intervals consist of certain geometrically-convex regions in $\mathbb{R}^n$, arising from a certain generalization of the one-parameter \emph{persistent landscape} \cite{bubenik2015statistical}, in their \emph{GRIL algorithm}.~Upperset-decomposable modules also appear naturally in several settings in persistence theory and relative homological algebra; cf.~\cite{bjerkevik2025stabilizing,asashiba2023approximation,blanchette2024homological,botnan2024signed,botnan2024bottleneck,chacholski2025koszul,oudot2024stability,kim2024interleaving}. 
Thus, in these settings, Lipschitz stability of barcodes with respect to a suitable metric, relative to perturbations measured by the interleaving distance, guarantees the robustness of the resulting invariants proposed in these works.

\section{Modules over a poset}
\label{sec:Modules Over a Poset}

This section describes concepts related to poset theory and persistence modules over a poset. The material here is standard, and this section mostly serves to set notation. For more background in this area, consult, e.g.,~\cite{bubenik2015metrics,bjerkevik2016stability,miller2019modules}.

\subsection{Posets and intervals}

We will refer to any partially ordered set $\PP=(\PP,\leq)$, as a \emph{poset}. We consider the poset $\PP$ as a category whose objects are elements of $\PP$ and whose morphisms are described by the partial order: there exists a (unique) morphism from object $p$ to object $q$ if and only if $p \leq q$.

\begin{definition}[{Interval in a Poset}]
\label{def:interval}
A subset $I \subset \PP$ is called
\begin{enumerate}
    \item\label{item:convexity} \emph{poset-convex} if,
    for each pair of points $p,q \in I$ with $p \leq q$, any $r \in \PP$ such that $p \leq r \leq q$ must satisfy $r \in I$;
    \item\label{item:connectedness} \emph{poset-connected} (or, simply, \emph{connected}) if, for each pair of points $p,q \in I$, there exists a \emph{path} from $p$ to $q$ in $I$---that is, there exists a sequence of points $r_1,\ldots,r_{2n} \in I$ such that 
    \[
    p \leq r_1 \geq r_2 \leq r_3 \geq \cdots \leq r_{2n-1} \geq r_{2n} \leq q.
    \]
\end{enumerate}
If $I$ is both poset-convex and poset-connected, then we say that $I$ is an \emph{interval}.
\end{definition}

\begin{remark}
    By definition, the empty set is an interval of $\PP$. We will refer to this as the \emph{empty interval}.
\end{remark}

One should not confuse poset-connectedness with connectedness in the topological sense. However, we frequently borrow topological terminology. For example, a subset $U \subset \PP$ is called a \emph{(poset-)connected component} of $\PP$ if it is poset-connected and maximal with respect to inclusion: if $V \subset P$ is connected and $V \cap U \neq \varnothing$ then $V \subset U$. 

\begin{example} \label{ex:R_n_poset}
    The poset $(\R^n,\leq_n)$ of $n$-tuples of real numbers is equipped with the coordinatewise partial order
    \begin{equation}\label{eqn:poset_structure}
    (t_1,t_2,\ldots,t_d) \leq_n (s_1,s_2,\ldots,s_d) \Leftrightarrow t_i \leq s_i,\text{ for any }i=1,\ldots,n.
    \end{equation}
    For simplicity, we will often write $\leq$ for $\leq_n$ when the meaning is clear.

    We note that a line segment with nonzero finite slope in $\R^2$ fails to be an interval. Specifically, segments with positive slope violate poset-convexity, while those with negative slope violate poset-connectedness. See \cref{fig:examples_intervals} 
    for illustrations of these phenomena and examples of intervals and non-intervals in $\R^2$ (see \cref{fig:examples_convex} for additional examples).
    
    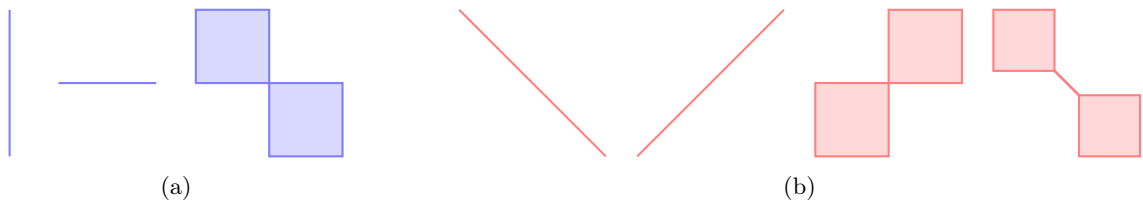
\begin{figure}[H]
\centering

\begin{subfigure}{.44\textwidth}
\centering
    \begin{tikzpicture}[scale=0.65] 
    \draw[thick, blue!50] (0,0) -- (0,3);
    \draw[thick, blue!50] (1,1.5) -- (3,1.5);
    \end{tikzpicture}
    \hspace{0.75em}
    \begin{tikzpicture}[scale=0.65] 
    \draw[thick, blue!50, fill=blue!15] (0,1.5) -- (0,3) -- (1.5,3) -- (1.5,1.5) -- (3,1.5) -- (3,0) -- (1.5,0) -- (1.5,1.5) -- cycle;
    \end{tikzpicture}
\caption{}
\label{fig:line_intervals}
\end{subfigure}
\begin{subfigure}{.55\textwidth}
\centering

    \begin{tikzpicture}[scale=0.65] 
    \draw[thick, red!50] (3,0) -- (0,3);
    \end{tikzpicture}
    \hspace{0.5em}
    \begin{tikzpicture}[scale=0.65] 
    \draw[thick, red!50] (0,0) -- (3,3);
    \end{tikzpicture}
    \hspace{0.5em}
    \begin{tikzpicture}[scale=0.65] 
    \draw[thick, red!50, fill=red!15] (0,1.5) -- (0,0) -- (1.5,0) -- (1.5,1.5) -- (3,1.5) -- (3,3) -- (1.5,3) -- (1.5,1.5) -- cycle;
    \end{tikzpicture}
    \hspace{0.5em}
    \begin{tikzpicture}[scale=0.65] 
    \draw[thick, red!50, fill=red!15] (0,1.75) -- (0,3) -- (1.25,3) -- (1.25,1.75) -- (1.75,1.25) -- (3,1.25) -- (3,0) -- (1.75, 0) -- (1.75,1.25)  -- (1.25,1.75) -- cycle;
    \end{tikzpicture}
\caption{}
\label{fig:line_non_intervals}
\end{subfigure}

\caption{Examples in $\mathbb{R}^2$: (a) intervals, shown in blue; (b) non-intervals, shown in red.}

\label{fig:examples_intervals}
\end{figure}

\end{example}

We now present a simple but useful structural result on intervals in posets.
While every sub-poset admits a unique partition into connected components, we show that if the sub-poset satisfies the poset convexity axiom (i.e., \cref{item:convexity} of \cref{def:interval}), then its connected components are themselves intervals. These are called the \emph{interval components} of the sub-poset.

\begin{lemma}\label{lem:interval-components}
Let $\PP$ be a poset and let $U \subset \PP$ be poset-convex.
Then every connected component of $U$ is an interval.
Consequently, $U$ admits a unique partition
\(
U = \bigsqcup_{a \in \AA} Q_a,
\)
where $\AA$ is an indexing set and each $Q_a$ is a non-empty interval component of $U$.
\end{lemma}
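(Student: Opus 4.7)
The plan is to first establish that the zigzag relation of \cref{def:interval}\cref{item:connectedness}, restricted to $U$, is an equivalence relation whose equivalence classes are exactly the connected components of $U$, and then to show that each such component inherits poset-convexity from $U$.

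First I would verify that the relation $\sim$ on $U$ defined by ``there exists a zigzag path from $p$ to $q$ in $U$'' is an equivalence relation. Reflexivity and symmetry are immediate (a constant path, and reversing the zigzag, respectively), while transitivity follows by concatenating two zigzag paths, possibly inserting a repeated vertex at the junction to preserve the required alternating $\leq\geq\leq\cdots$ pattern. The equivalence classes are, by construction, the maximal poset-connected subsets of $U$, so they coincide with the connected components. This already gives the partition $U=\bigsqcup_{a\in\AA}Q_a$, and uniqueness follows from the fact that equivalence classes are uniquely determined by the relation.

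The main step is to show each $Q_a$ is poset-convex. Suppose $p,q\in Q_a$ with $p\leq q$ in $\PP$, and let $r\in\PP$ satisfy $p\leq r\leq q$. Because $U$ is poset-convex in $\PP$, we have $r\in U$. Since $p\leq r$ with $p,r\in U$, there is a trivial zigzag path from $p$ to $r$ inside $U$ (for example, take $r_1=r_2=r$, yielding $p\leq r\geq r\leq r$), so $p\sim r$ in $U$. Because $p\in Q_a$, this forces $r\in Q_a$. Thus $Q_a$ is poset-convex, and since it is also poset-connected by definition of connected component, it is an interval.

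The only point requiring a little care is the bookkeeping around the zigzag convention: the definition prescribes an even number of intermediate vertices with alternating inequalities, starting and ending with $\leq$. One has to check that a single comparison $p\leq r$ with $r\in U$ really does yield a valid path in the stipulated form, and that concatenation of two such paths remains a valid path after possibly padding with repeated vertices. This is the main obstacle, but it is purely combinatorial and straightforward; there is no deeper difficulty in the argument.
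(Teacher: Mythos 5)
Your proposal is correct and follows essentially the same route as the paper's own proof: both reduce the claim to showing that a connected component $Q_a$ of $U$ inherits poset-convexity, and both do so by noting that $r \in U$ (by poset-convexity of $U$) and then that the one-step relation $p \leq r$ already gives a zigzag path connecting $r$ to the component containing $p$, forcing $r \in Q_a$. The only cosmetic difference is that you frame connected components as equivalence classes of the zigzag relation, while the paper invokes maximality of $Q_a$ directly; these are two equivalent ways to articulate the same step, and the padding-with-repeated-vertices bookkeeping you flag is indeed the only delicate point, and is handled correctly.
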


\begin{proof}
Let $Q$ be a connected component of $U$. Since $Q$ is poset-connected by assumption, we need only to show that $Q$ is poset-convex.
Take $p,q\in Q\subset U$ and $r\in \PP$ with $p\le r\le q$.  
Since $U$ is poset-convex, $r\in U$.  
Because $Q$ is poset-connected, for any point $t \in Q$, there exists a path from $t$ to $p$, and hence a path from $t$ to $r$ by concatenating with $p\leq r$. We thus obtain
paths in $Q\cup \{r\}$ from every point of $Q$ to $r$, so the maximality of $Q$ implies $r \in Q$, and it follows that $Q$ is poset-convex. Since $Q$ is poset-convex and poset-connected, it is an interval.
\end{proof}

In what follows, we will frequently need to consider properties of intersections of subsets of a poset. The following result will be useful.

\begin{lemma}\label{lem:intersection of poset-convex}
    Let $I,J \subset \PP$ be poset-convex subsets. Then $I \cap J$ is also poset-convex. It follows that the interval components of $I \cap J$ are well-defined.
\end{lemma}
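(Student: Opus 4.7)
The plan is to verify poset-convexity of $I\cap J$ directly from the definition, and then to invoke \cref{lem:interval-components} to obtain the interval component decomposition. No further machinery is needed.

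First, I would take an arbitrary pair $p,q\in I\cap J$ with $p\le q$ and an arbitrary $r\in\PP$ satisfying $p\le r\le q$, and show that $r\in I\cap J$. Since $p,q\in I$ and $I$ is poset-convex, item \ref{item:convexity} of \cref{def:interval} yields $r\in I$. The identical argument applied to $J$ gives $r\in J$. Hence $r\in I\cap J$, which establishes that $I\cap J$ is poset-convex.

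Second, I would apply \cref{lem:interval-components} to the poset-convex subset $I\cap J$. That lemma partitions any poset-convex set uniquely into interval components, so the second sentence of the statement follows immediately.

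There is essentially no obstacle here: the lemma is a direct unwinding of the definitions, together with a black-box application of the previous result. The only mildly subtle point is conceptual rather than technical—namely, that the argument in fact shows poset-convexity is preserved under arbitrary intersections, not just pairwise ones, since the same element-chase applies to any family $\{I_\alpha\}$ simultaneously. However, since the statement is formulated for pairwise intersections, no extra argument or inductive step is required.
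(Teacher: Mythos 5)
Your proof is correct and takes essentially the same route as the paper: an element chase from the definition of poset-convexity, followed by an appeal to \cref{lem:interval-components} for the interval component decomposition. The remark about arbitrary intersections is a nice observation but not needed here.
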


\begin{proof}
    Let $p,q \in I \cap J$ with $p \leq q$ and let $r \in \PP$ with $p \leq r \leq q$. Then poset-convexity of $I$ and $J$ implies that $r \in I \cap J$ as well. The last sentence of the lemma then follows from Lem.~\ref{lem:interval-components}.
\end{proof}

A \emph{poset morphism} is an order-preserving map between posets.  
A poset morphism is a \emph{poset isomorphism} if it is bijective and its
inverse is also order-preserving.  
It is straightforward to check that poset isomorphisms preserve poset-convexity and poset-connectedness, and therefore preserve interval structure.

\subsection{Modules over a poset}

Throughout this section, fix a poset $\PP$. 
We also fix a field of coefficients $\field$ and let $\Vect_{\field}$ denote the category of vector spaces over $\field$.

\begin{definition}[$\PP$-Module]\label{def:P_Modules}
A \emph{$\PP$-module} is a functor $\Mfunc:\PP\to \Vect_{\field}$.
Specifically, $\Mfunc$ consists of
\begin{itemize}
    \item a $\PP$-indexed family of $\field$-vector spaces $\Mfunc_p$, $p\in \PP$,
    \item for every inequality $p\leq q$ in $\PP$, a $\field$-linear transformation $\varphi_\Mfunc(p,q):\Mfunc_p\to\Mfunc_q$,  such that 
    \[\varphi_\Mfunc(q,r)\circ\varphi_\Mfunc(p,q)=\varphi_\Mfunc(p,r),\]
    whenever $p\leq q\leq r$ in $\PP$, and $\varphi_\Mfunc(p,p)$ is the identity map on $\Mfunc_p$, for any $p\in\PP$.
\end{itemize}

A \emph{morphism $f:\Mfunc\to \Nfunc$ of $\PP$-modules} is a natural transformation $f$ from the functor $\Mfunc$ to the functor $\Nfunc$.
Specifically, $f$ consists of a $\PP$-indexed family of linear transformations $f_p:\Mfunc_p\to\Nfunc_p$, $p\in P$, such that for every $p\leq q$ the diagram 
    \[
    \begin{tikzcd}
 \Mfunc_p\arrow[d, swap, "{\varphi_\Mfunc(p,q)}"]\arrow[r, "{f_p}"]\&\Nfunc_p\arrow[d, "{\varphi_\Nfunc(p,q)}"]\\
\Mfunc_q\arrow[r, "{f_q}"]\&\Nfunc_q
\end{tikzcd}
\]
commutes.

If $f:\Mfunc \to \Nfunc$ and $g:\Nfunc \to \Kfunc$ are morphisms of modules over $P$, then \emph{the composition $g\circ f: \Mfunc \to \Kfunc$ of $f$ with $g$} is the morphism defined pointwise by
\[(g\circ f)_p\coloneqq g_p\circ f_p\text{ for all }p\in \PP.\]
It is not hard to show that if each linear transformation $f_p$ is a linear  isomorphism, then $f$ is an \emph{isomorphism from $M$ to $N$}; that is, it admits and inverse $f^{-1}$ defined by $(f^{-1})_p = (f_p)^{-1}$, such that $f \circ f^{-1}$ and $f^{-1} \circ f$ are identity functors. If there exists an isomorphism $f:M\to N$, we write $M\cong N$.

We will say that the morphisms $f:M\to N$ and $f':M'\to N'$ are equivalent, denoted $f\cong f'$, if there are isomorphisms $g:M'\to M$ and $h:N\to N'$ such that $f'=h\circ f\circ g$.
\end{definition}

\subsection{Interval decomposable modules}
We are primarily concerned with persistence modules which have a special decomposition structure. We give the necessary definitions below.

\begin{definition}[Characteristic Module]
For $I$ an interval in $\PP$, the \emph{characteristic $\PP$-module} of $I$,  
$\Cfunc(I):\PP\to\Vect_{\field}$, is defined as
\[ 
\Cfunc(I)_p\coloneqq\begin{cases} 
      \field, & \text{ if }p\in I \\
      \vZero, & \text{ otherwise}
   \end{cases}
\qquad \mbox{ and } \qquad 
\varphi_{\Cfunc(I)}(p,q)\coloneqq
\begin{cases} 
      \id_{\field}, & \text{ if }p,q\in I \\
      \vZero, & \text{ otherwise,}
   \end{cases}
\]
where $\mathbf{0}$ denotes the zero vector space, or the zero linear map, respectively, and $\id_{\field}$ is the identity map on the base field $\field$.
A persistence module $\Mfunc:(P,\leq)\to\Vect_{\field}$ is an \emph{interval module} if $\Mfunc=\Cfunc(I)$, for some interval $I$ of $P$. In this case, $I$ is the \emph{support of the interval module $\Mfunc$}. For $I=\varnothing$, one has $C(\varnothing)=\mathbf{0}$.
\end{definition}

\begin{definition}[Interval Decomposable Module]
A $\PP$-module $\Mfunc:\PP\to\Vect_{\field}$ is \emph{interval-decomposable} if 
\[\Mfunc\cong\bigoplus_{I\in  \barc(\Mfunc)}\Cfunc(I),\]
for a multiset\footnote{By a multiset, $B(M)$, we mean a multiset representation $\{(I_a,a)\mid a\in A\}$ whose base set $\{I_a\mid a\in A\}$ consists of intervals of $\PP$; see \cite{bauer2020persistence}.} of nonempty intervals in $\PP$, $\barc(\Mfunc)$, called the \emph{barcode} of $\Mfunc$.
\end{definition}

The barcode is uniquely determined (this follows from the Azumaya–Krull–Remak–Schmidt module decomposition theorem~\cite{azumaya1950corrections}). 

Let $\Mfunc,\Nfunc$ be interval-decomposable $\PP$-modules.
Any morphism $f:\Mfunc\to\Nfunc$ is equivalent (in the sense of \cref{def:P_Modules}) to a direct sum of morphisms of interval modules, i.e.
\begin{equation}\label{eqn:morphism_decomposition}
    f \cong \bigoplus_{(I,J)\in  \barc(\Mfunc)\times  \barc(\Nfunc)}f_{I,J},
\end{equation}
where $f_{I,J}:\Cfunc(I)\to \Cfunc(J)$ is a morphism of  interval modules. If $f:M\to N$ and $g:N\to L$ are morphisms of interval modules, then their composition morphism $g\circ f:M\to L$ has
\begin{equation}\label{eqn:composition morphism}
(g\circ f)_{I,K}=\sum_{J\in  \barc(\Nfunc)}g_{J,K}\circ f_{I,J},
\end{equation}
for any pair $(I,K)\in B(M)\times B(L)$. Hence, the study of morphisms of interval-decomposable modules reduces to the study of morphisms of interval modules.

\section{Morphisms of interval modules}

We now restrict attention to interval modules. These constitute the basic building blocks of interval-decomposable modules, and understanding their morphisms is essential for the analysis of interleavings and distance notions developed later in this work. In this section, we study morphisms between interval modules from a geometric perspective, emphasizing how their existence and behavior are controlled by the relative position of the underlying intervals in the poset. The results of this section have appeared previously in the context of $\R^n$ modules~\cite{dey2018computing}, but, to our knowledge, have not been systematically considered in the TDA literature for more general poset modules.

\subsection{Geometric characterization of morphisms of interval modules}

In this subsection, we give a geometric characterization of morphisms between interval modules over $\PP$, generalizing work of Dey and Xin~\cite{dey2018computing} in the context of modules over $\R^n$. We begin by defining a property of a $\PP$-indexed family of linear maps.

\begin{definition}[Consistent Family of Maps]
    Let $I,J$ be intervals in $\PP$ with interval components of $I\cap J$ denoted $\{Q_a\}_{a \in \mathcal{A}}$ (see Lem.~\ref{lem:intersection of poset-convex}). A family $f = \{f_p:C(I)_p \to C(J)_p \mid p \in \PP\}$ of linear maps is called \emph{consistent} if
    \begin{enumerate}
        \item for every interval component $Q_a$ and every pair $p,q \in Q_a$, we have $f_p = f_q$, and
        \item for every $p\notin I\cap J$,  $f_p$ is the zero map.
    \end{enumerate}  
    In this case, there exists a scalar $\omega_a(f)\in\field$, such that for any $p\in Q_a$, the linear map $f_p:\Cfunc(I)_p\to\Cfunc(J)_p$ acts by multiplication by $\omega_a(f)$ on $\Cfunc(I)_p = \field = \Cfunc(J)_p$.
\end{definition}

The next proposition shows that consistency of the underlying family of maps is a necessary condition for any morphism of interval modules.

\begin{proposition}
\label{prop:necessary condition}
Let $I,J$ be intervals in $\PP$ and let  $f: \Cfunc(I) \to\Cfunc(J)$ be a morphism. Then $f = \{f_p\}_{p \in \PP}$ forms a consistent family.
\end{proposition}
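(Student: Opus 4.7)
The plan is to verify the two consistency conditions independently, with condition (2) being essentially immediate and condition (1) following from the naturality of $f$ combined with the poset-connectedness of interval components.

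For condition (2), suppose $p \notin I \cap J$. Then either $p \notin I$, in which case $\Cfunc(I)_p = \vZero$, or $p \notin J$, in which case $\Cfunc(J)_p = \vZero$. Either way, $f_p$ is a linear map with trivial domain or codomain, hence the zero map.

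For condition (1), fix an interval component $Q_a$ of $I \cap J$ (well-defined by \cref{lem:intersection of poset-convex}). The plan is a two-step argument: first show that $f_p = f_q$ whenever $p,q \in Q_a$ are comparable, and then promote this to all pairs $p,q \in Q_a$ using the poset-connectedness clause in the definition of $Q_a$. For comparable points $p \leq q$ in $Q_a$, note that both lie in $I$ and both lie in $J$, so by definition of the characteristic module the structure maps $\varphi_{\Cfunc(I)}(p,q)$ and $\varphi_{\Cfunc(J)}(p,q)$ are the identity on $\field$. The naturality square for $f$ then reads
\[
\id_\field \circ f_p \;=\; \varphi_{\Cfunc(J)}(p,q)\circ f_p \;=\; f_q\circ \varphi_{\Cfunc(I)}(p,q) \;=\; f_q\circ \id_\field,
\]
yielding $f_p = f_q$.

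For arbitrary $p,q \in Q_a$, invoke the fact (\cref{lem:interval-components}) that $Q_a$ is itself an interval, and in particular poset-connected: there exists a zigzag $r_1,\dots,r_{2n} \in Q_a$ with $p \leq r_1 \geq r_2 \leq r_3 \geq \cdots \geq r_{2n} \leq q$. Applying the comparable-points case to each consecutive pair in the zigzag chains the equalities $f_p = f_{r_1} = f_{r_2} = \cdots = f_{r_{2n}} = f_q$. In particular, the common value is multiplication by a single scalar on $\Cfunc(I)_p = \field = \Cfunc(J)_p$, which is the promised $\omega_a(f)$.

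There is no real obstacle in this proof; the argument amounts to bookkeeping that combines three inputs: (i) characteristic modules have identity structure maps within their support, (ii) naturality of $f$ forces agreement along comparable pairs, and (iii) interval components are poset-connected, so comparability can be chained across a zigzag.
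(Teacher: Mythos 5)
Your proof is correct and follows essentially the same argument as the paper's: naturality plus identity structure maps on $I\cap J$ force $f_p = f_q$ for comparable $p,q$, and poset-connectedness of each component $Q_a$ chains this across a zigzag, while the vanishing off $I\cap J$ is immediate from a trivial domain or codomain. You merely spell out the zigzag step a bit more explicitly than the paper, but the idea and structure are identical.
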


 \begin{proof}
If $I \cap J = \emptyset$, then the claim follows trivially, so assume not. 
Since $f \colon \Cfunc(I) \to \Cfunc(J)$ is a morphism, naturality gives
\[
\varphi_{\Cfunc(J)}(p,q) \circ f_p
=
f_q \circ \varphi_{\Cfunc(I)}(p,q)
\qquad
\text{for all } p \le q \text{ in } Q_a .
\]
As both structure maps $\varphi_{\Cfunc(I)}(p,q)$ and $\varphi_{\Cfunc(J)}(p,q)$ act as the identity on $\field$,
it follows that $f_p = f_q$ for all $p \le q$ in $Q_a$.
Hence, for $p\in Q_a$, each $f_p$ is a linear endomorphism of $\field$, and therefore given by multiplication by a scalar
in $\field$.
By the poset-connectedness of the interval $Q_a$, this scalar is independent of $p\in Q_a$, and thus constant
throughout $Q_a$. 

On the other hand, suppose $p \notin I\cap J$. Because either $p \notin I$ or $p \notin J$, we have $\Cfunc(I)_p=0$ or $\Cfunc(J)_p=0$. Thus, the linear map $f_p:\Cfunc(I)_p\to\Cfunc(J)_p$ is the zero map.
 \end{proof}

Next, we introduce a purely poset-theoretic property, with a view toward a geometric condition which is equivalent  to the converse of Prop.~\ref{prop:necessary condition}. 

\begin{definition}[{Valid Component~\cite[Defn.~15]{dey2018computing}}]
    Let $I,J \in \PP$ be intervals and let $Q \subset I \cap J$ be a connected component. We say that $Q$ is \emph{$(I,J)$-valid} if the following hold for every $p \in Q$: 
    \begin{enumerate}
        \item for all $q \in I$ such that $q \leq p$, we have $q \in J$, and 
        \item for all $r \in J$ such that $r \geq p$, we have $r \in I$.
    \end{enumerate}
\end{definition}

The following result gives a generalization of \cite[Prop.~16]{dey2018computing}; the proof is the same, and we include it here for convenience of the reader.

\begin{proposition}
\label{prop:equivalent condition}
Let $I,J$ be intervals in $\PP$, let $\{Q_a\}_{a \in \mathcal{A}}$ denote the interval components of $I \cap J$, and let $f = \{f_p\}_{p \in \PP}$ be a consistent family of linear maps, with associated scalars denoted $\omega_a(f)$. The following are equivalent:
\begin{enumerate}
    \item $f$ defines a morphism $C(I) \to C(J)$;
    \item for every $Q_a$ such that $\omega_a(f) \neq 0$, $Q_a$ is $(I,J)$-valid.
\end{enumerate}
\end{proposition}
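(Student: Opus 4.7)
The plan is to verify that the consistent family $f$ forms a natural transformation by checking the naturality square for each comparable pair $p\le q$, reducing each square to a scalar equation via a case analysis on the memberships of $p$ and $q$ in $I$ and $J$. This is feasible because every structure map $\varphi_{\Cfunc(I)}(p,q)$ and $\varphi_{\Cfunc(J)}(p,q)$ is either $\id_\field$ or zero, and each $f_p$ is either scalar multiplication by $\omega_a(f)$ (when $p\in Q_a$) or zero.

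The only potentially nontrivial squares are those with $p\in I$ and $q\in J$; otherwise the source or target vanishes and the square commutes trivially. Splitting this regime further according to whether $q\in I$ and whether $p\in J$ yields four sub-cases. \textbf{(A) $p,q\in I\cap J$:} poset-convexity of $I\cap J$ (\cref{lem:intersection of poset-convex}) together with $p\le q$ forces $p$ and $q$ to lie in a common interval component $Q_a$, so $f_p=f_q=\omega_a(f)\cdot\id$ and both structure maps are identities, making the square commute automatically. \textbf{(B) $p\in Q_a$, $q\in J\setminus I$:} here $\varphi_{\Cfunc(I)}(p,q)=0$, $\varphi_{\Cfunc(J)}(p,q)=\id$, $f_p=\omega_a(f)\cdot\id$ and $f_q=0$, so the naturality equation collapses to $\omega_a(f)=0$. \textbf{(C) $p\in I\setminus J$, $q\in Q_b$:} symmetric to (B), collapsing to $\omega_b(f)=0$. \textbf{(D) $p\in I\setminus J$, $q\in J\setminus I$:} all four relevant maps vanish and the square commutes trivially.

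With this case analysis in hand, I would finish by translating (B) and (C) into the language of validity. A pair of type (B) with $\omega_a(f)\ne 0$ is precisely a witness to the failure of the second $(I,J)$-validity condition at $p\in Q_a$ (take $r=q$), and a pair of type (C) with $\omega_b(f)\ne 0$ is precisely a witness to the failure of the first $(I,J)$-validity condition at $q\in Q_b$ (take the element of $I$ to be $p$). Conversely, any failure of $(I,J)$-validity of some $Q_a$ manifests as a comparable pair of type (B) or (C) with that particular $a$. Hence every naturality square commutes if and only if every component $Q_a$ with $\omega_a(f)\ne 0$ is $(I,J)$-valid, which is the desired equivalence.

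I do not expect a real obstacle: the argument is finite case analysis. The only subtle step is (A), which relies on the observation that if $p\le q$ both lie in $I\cap J$, then poset-convexity of $I\cap J$ confines the chain $p\le q$ to $I\cap J$ and hence to a single interval component, so that consistency of $f$ yields $f_p=f_q$ without further input.
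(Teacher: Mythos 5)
Your proof is correct and takes essentially the same approach as the paper: both arguments verify the naturality squares pointwise by exploiting the fact that in interval modules every structure map is either the identity or zero and every $f_p$ is a scalar. The paper handles the two implications separately and streamlines the converse by focusing on "at least one composite nonzero," whereas you run a full taxonomy of cases (A)--(D) that establishes both directions at once; this is an organizational difference, not a substantive one.
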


\begin{proof}
    First, suppose that $f$ defines a morphism and let $Q_a$ be a connected component such that $\omega_a(f) \neq 0$. Let $p \in Q_a$, $q \in I$, and suppose $q \leq p$. By naturality, we have 
    \[
    0 \neq f_p \circ \varphi_{\Cfunc(I)}(q,p) = \varphi_{\Cfunc(J)}(q,p) \circ f_q,
    \]
    hence $q \in J$. The other condition for validity is proved similarly.

    We now prove the converse, which requires us to show that each diagram of the form
        \[
    \begin{tikzcd}
 C(I)_q\arrow[d, swap, "{f_q}"]\arrow[rr, "{\varphi_{\Cfunc(I)}(q,p)}"]\& \&C(I)_p \arrow[d, "{f_p}"]\\
C(J)_q\arrow[rr, "{\varphi_{\Cfunc(J)}(q,p)}"]\&\& C(J)_p
\end{tikzcd}
\]
commutes. Let us consider the only interesting case, where at least one of the maps is nonzero, say $f_p \circ \varphi_{\Cfunc(I)}(q,p)$ (the other case is proved by a similar argument). Then $\varphi_{\Cfunc(I)}(q,p)$ and $f_p $ are both  nonzero, which immediately implies $q,p \in I$. Moreover, consistency of $f$  implies $p \in I \cap J$; in particular, $p$ lies in some path component $Q_a$, so that $f_p \circ \varphi_{\Cfunc(I)}(q,p) = f_p \circ \mathrm{Id}_\mathbb{K}$ is multiplication by the nonzero scalar $\omega_a(f)$. By the validity assumption, $q \in I$ and $q \leq p$ together imply that $q \in J$, hence $q \in I \cap J$. As $q \leq p$, $q$ must also lie in the path component $Q_a$, so that $\varphi_{\Cfunc(J)}(q,p) \circ f_q = \mathrm{Id}_\mathbb{K} \circ f_q$ is also multiplication by $\omega_a(f)$. 
\end{proof}

The proposition immediately yields the following  corollary, which we use repeatedly throughout the paper.

 \begin{corollary}
 \label{cor:nonzeromorphismiffvalid}
 Let $I,J$ be intervals in $\PP$ such that $I\cap J$ is an  interval.
 Then, there exists a nonzero
 morphism $f:\Cfunc(I) \to\Cfunc(J)$ if and only if $I\cap J$ is $(I,J)$-valid.
 \end{corollary}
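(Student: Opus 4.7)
The plan is to deduce this corollary directly from Propositions~\ref{prop:necessary condition} and~\ref{prop:equivalent condition}. The key simplification offered by the hypothesis that $I \cap J$ is itself an interval is that the indexed family $\{Q_a\}_{a \in \AA}$ of interval components of $I \cap J$ collapses to a single component $Q_0 = I \cap J$ (when $I \cap J$ is nonempty) carrying a single scalar $\omega_0(f)$. This reduces the component-wise validity condition of Proposition~\ref{prop:equivalent condition} to a single condition on $I \cap J$. The case $I \cap J = \varnothing$ is vacuous: any family $f$ forced to vanish everywhere is the zero morphism, so I would dispose of this case in a brief remark and proceed under the assumption $I \cap J \neq \varnothing$.

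For the forward direction, I would start with a nonzero morphism $f\colon \Cfunc(I) \to \Cfunc(J)$, apply Proposition~\ref{prop:necessary condition} to conclude that $f = \{f_p\}_{p \in \PP}$ is a consistent family, and observe that consistency forces $f_p = 0$ for every $p \notin I \cap J$. Thus the assumed nonvanishing of $f$ must come from a point in $I \cap J$, so the unique scalar $\omega_0(f)$ is nonzero. Applying Proposition~\ref{prop:equivalent condition} in the direction (1)$\Rightarrow$(2) then yields that the unique interval component $I \cap J$ is $(I,J)$-valid.

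For the converse, I would explicitly exhibit a candidate morphism. Assuming $I \cap J$ is $(I,J)$-valid, define the family $f$ by $f_p = \id_\field$ for $p \in I \cap J$ and $f_p = 0$ otherwise. By construction, $f$ is consistent with $\omega_0(f) = 1 \neq 0$, so Proposition~\ref{prop:equivalent condition} in the direction (2)$\Rightarrow$(1) certifies that $f$ is a morphism $\Cfunc(I) \to \Cfunc(J)$, and it is nonzero by construction. I do not anticipate any real obstacle here: the corollary is essentially a repackaging of Proposition~\ref{prop:equivalent condition} under the single-component hypothesis, using Proposition~\ref{prop:necessary condition} to convert an arbitrary morphism into a consistent family for the forward direction.
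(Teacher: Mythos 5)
Your proposal is correct and takes essentially the same route the paper does; the paper simply declares the corollary to follow ``immediately'' from \cref{prop:equivalent condition}, and your write-up fleshes out exactly what that means: under the hypothesis that $I\cap J$ is an interval, the component family collapses to the single component $Q_0 = I\cap J$, so \cref{prop:necessary condition} converts a nonzero morphism to a consistent family with nonzero scalar and \cref{prop:equivalent condition} transfers validity back and forth. Your explicit witness $f_p=\id_\field$ on $I\cap J$ and $0$ elsewhere for the converse direction is the natural choice. One small point worth tightening: in the $I\cap J=\varnothing$ case you note that no nonzero morphism exists, but for the ``if and only if'' to hold you also need to say that $\varnothing$ is not regarded as $(I,J)$-valid (validity in \cref{def:interval} is phrased for \emph{connected components}, which are nonempty by \cref{lem:interval-components}); as written, a reader applying the validity conditions literally to $\varnothing$ would find them vacuously satisfied, which would contradict the corollary. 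This is a matter of convention rather than a logical error in your argument, and the paper elides it as well, but your ``brief remark'' should make the convention explicit.
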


\subsection{Geometric characterization of nonzero compositions with interval support}

We provide below a criterion for checking whether a composition of two nonzero morphisms of interval modules is nonzero, when the underlying support of the intervals satisfy certain conditions. 
This will be useful to us in the following sections.

\begin{proposition}
\label{prop:nonzero composition}
Let $I,J,K$ be intervals in $\PP$ such that $I\cap J$, $J\cap K$, and $I\cap K$ are intervals.
Let $f:\Cfunc(I) \to \Cfunc(J)$ and $g:\Cfunc(J) \to \Cfunc(K)$ be two nonzero morphisms.
Then, the composition morphism $g\circ f$ is nonzero
if and only if $\varnothing \neq I\cap K\subset J$.
\end{proposition}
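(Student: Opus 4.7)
The plan is to leverage the consistent-family structure from Prop.~\ref{prop:necessary condition} to reduce the question to a pointwise scalar computation. Since $f:\Cfunc(I)\to\Cfunc(J)$ is a nonzero morphism and $I\cap J$ is by hypothesis a single interval, Prop.~\ref{prop:necessary condition} forces $f$ to act as multiplication by a single nonzero scalar $\omega\in\field$ on all of $I\cap J$, and as $0$ elsewhere; similarly, $g$ acts as multiplication by some nonzero scalar $\nu\in\field$ on $J\cap K$, and as $0$ elsewhere. Thus the pointwise composition $(g\circ f)_p = g_p\circ f_p$ is multiplication by $\omega\nu\neq 0$ precisely when $p\in I\cap J\cap K$, and is $0$ otherwise.

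The ``if'' direction is then immediate: assuming $\varnothing \neq I\cap K\subset J$, we have $I\cap K = I\cap J\cap K$, so picking any $p$ in this nonempty set yields $(g\circ f)_p = \omega\nu\neq 0$, hence $g\circ f\neq 0$. For the ``only if'' direction, suppose $g\circ f\neq 0$. Then some $p\in\PP$ satisfies $(g\circ f)_p\neq 0$, which by the pointwise formula forces $p\in I\cap J\cap K$, so in particular $I\cap K\neq\varnothing$. To establish $I\cap K\subset J$, I would argue by contradiction: assume there exists $q\in I\cap K$ with $q\notin J$. Since the composition $g\circ f$ is itself a morphism $\Cfunc(I)\to\Cfunc(K)$, Prop.~\ref{prop:necessary condition} applies to yield a consistent family, and since $I\cap K$ is a single interval component by hypothesis, $(g\circ f)_p$ and $(g\circ f)_q$ must coincide as scalars. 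But the pointwise formula gives $(g\circ f)_p = \omega\nu\neq 0$ while $(g\circ f)_q = 0$, a contradiction.

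The main conceptual point, rather than a technical obstacle, is the use of consistency as a propagation principle: once $(g\circ f)_p\neq 0$ at one point of $I\cap K$, consistency of $g\circ f$ on the single interval component $I\cap K$ forces the scalar to be nonzero at every point of $I\cap K$, which in turn forces every such point to lie in $J$. The three hypotheses that $I\cap J$, $J\cap K$, and $I\cap K$ are all intervals (rather than unions of interval components) are exactly what ensures these scalar reductions each give a single global scalar rather than a family of scalars indexed by components; without them, one would have to argue componentwise and track which scalars vanish, making the statement more delicate.
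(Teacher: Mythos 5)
Your proof is correct and follows essentially the same route as the paper's: both reduce to consistency of $f$, $g$, and $g\circ f$ via Prop.~\ref{prop:necessary condition}, identify the pointwise support of $g\circ f$ as lying in $I\cap J\cap K$, and then use the fact that $I\cap K$ is a single interval to propagate nonvanishing across all of $I\cap K$ and conclude $I\cap K\subset J$. The paper phrases the last step directly as an equality of supports rather than via your contradiction argument, but the underlying idea is identical.
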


\begin{proof} 
Because $f$ and $g$ are nonzero and $I\cap J$, $J\cap K$ are intervals,
\cref{prop:necessary condition} gives
\[
f_p\neq \vZero \iff p\in I\cap J,
\qquad
g_p\neq \vZero \iff p\in J\cap K.
\]
Thus $(g\circ f)_p\neq \vZero$ can occur only when both $f_p$ and $g_p$ are nonzero,  
so only when $p\in I\cap J\cap K$.
That is, 
\[
\{p\in\PP \mid (g\circ f)_p\neq \vZero\}
\subseteq  I\cap J\cap K.
\]

If $g\circ f\neq \vZero$, since $I\cap K$ is an interval, 
\cref{prop:necessary condition} implies
\(
\varnothing \neq \{p\in\PP \mid (g\circ f)_p\neq \vZero\} = I\cap K.
\)
Thus, we have $I\cap K \subset I\cap J\cap K$, which is equivalent to $I\cap K \subset J$. Conversely, assume $\varnothing\neq I\cap K\subset J$. For any $p\in I\cap K$ we
then have $p\in I\cap J$ and $p\in J\cap K$, so by
\cref{prop:necessary condition} both $f_p$ and $g_p$ are nonzero scalar maps.
Therefore,
\(
(g\circ f)_p = g_p\circ f_p \neq \vZero,
\)
and hence $g\circ f\neq \vZero$.
\qedhere
\end{proof}

\begin{remark}
Note that if $g\circ f \neq 0$, then necessarily $f \neq 0$, $g \neq 0$, and $I \cap J \neq \varnothing$. One might ask whether the converse holds, that is, whether the additional condition $I \cap K \subset J$ is superfluous for ensuring that $g \circ f$ is nonzero under these assumptions. This is not the case. Indeed, there exist examples where $f \neq 0$, $g \neq 0$, and $I \cap K \neq \varnothing$, yet $g \circ f = 0$ unless the extra relation $I \cap K \subset J$ is imposed. Such an example is illustrated in \cref{fig:comp_exp}.
\end{remark}
\usetikzlibrary{patterns}
\begin{figure}[H]
\centering
\begin{tikzpicture}[scale=0.4, line cap=round, line join=round]

\def\xmin{-6} \def\xmax{7}
\def\ymin{-4} \def\ymax{5}

\def\rxL{-6} \def\rxR{-1}
\def\ryB{1}  \def\ryT{5}

\coordinate (L1a) at (\xmin,2);
\coordinate (L1b) at (\xmax,-1);

\coordinate (L2a) at (-2.2,\ymax);  
\coordinate (L2b) at (1.2,\ymin);   

\draw[thick, fill=black!10] (\rxL,\ryB) rectangle (\rxR,\ryT);

\begin{scope}
  \clip (\xmin,\ymin) rectangle (\xmax,\ymax);
  \clip (L2a) -- (L2b) -- (\xmax,\ymin) -- (\xmax,\ymax) -- cycle;
  \fill[pattern=horizontal lines, pattern color=black!35]
       (\xmin,\ymin) rectangle (\xmax,\ymax);
\end{scope}

\begin{scope}
  \clip (\xmin,\ymin) rectangle (\xmax,\ymax);
  \clip (L1a) -- (L1b) -- (\xmax,\ymin) -- (\xmin,\ymin) -- cycle;
  \fill[pattern=vertical lines, pattern color=black!35]
       (\xmin,\ymin) rectangle (\xmax,\ymax);
\end{scope}

\draw[thick] (L1a) -- (L1b);
\draw[thick] (L2a) -- (L2b);

\node at (-3.5,3.4) {\large J};
\node at (5,3.2) {\large I};
\node at (-3.2,-2.2) {\large K};

\end{tikzpicture}
\caption{An example of $g\circ f=0$, where $0\neq f: C(I)\to C(J)$ and $0 \ne g: C(J)\to C(K)$.
Here, each $f_p$ is the identity map on $I\cap J$ and zero elsewhere, and each $g_p$ is the identity map on $J\cap K$ and zero elsewhere, but their composition $g_p\circ f_p$ is always zero.}
\label{fig:comp_exp}
\end{figure}
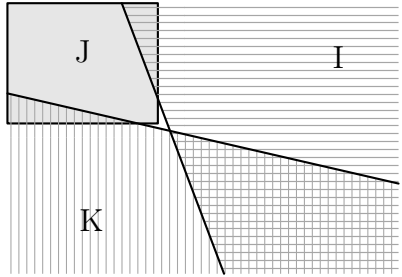

\section{Distances on interval-decomposable modules}
\label{sec:Distances}
We now return to interval-decomposable $\PP$-modules. In this context, we recall the definitions of $\R$-flows on a poset, and the interleaving-, bottleneck-, and Hausdorff distances. These distances are known to be extended pseudometrics\footnote{A function satisfying the axioms of a metric but taking values in $[0,\infty]$ is called an \emph{extended metric}, and if in addition to that, distinct points are allowed to have distance zero, it is called an \emph{extended pseudometric}.}; cf.~\cite{chazal2009proximity,edelsbrunner2010computational} on $\PP$-modules.

\subsection{Flows on posets}\label{sec:flows}

In this section, define the notion of interleaving distance between $\PP$-modules. In the context of $\R$-modules, this definition goes back to~\cite{chazal2009proximity}. It has since become a fixture in the literature on interleaving distances, and has been widely generalized~\cite{bubenik2014categorification,bubenik2015metrics,deSilva2018,scoccola2020locally,mcfaddin2026interleaving}. Four our purposes, it will be most natural to work in the \emph{category with a flow framework} of~\cite{deSilva2018}.

We first define the concept of an $\R$-flow on a poset. For our purposes, we consider flows as $\mathbb{R}$-actions, rather than $[0,\infty)$-actions, on posets, which is the relevant setting for the applications we have in mind. 

\begin{definition}[$\R$-Flow]
Let $\PP$ be a poset. 
An \emph{$\R$-flow} on $\PP$ is a family of poset maps 
$\Omega = \{\Omega_t: \mathcal P \to \mathcal P\}_{t\in\R}$, satisfying
\begin{enumerate}
\item $\Omega_t(p)\leq \Omega_s(p)$, for all $t\leq s$ in $\R$ and $p \in \PP$,
    \item $\Omega_{t+s} = \Omega_t \circ \Omega_s$, for all $t,s \in \R$,
    \item $\Omega_0 = \mathrm{id}_{\PP}$.
\end{enumerate}
\end{definition}

\begin{example}\label{ex:R_flow}
The main paradigm of a poset with $\R$-flow is the poset of $n$-tuples of real numbers with diagonal shifts.
Recall from \cref{eqn:poset_structure} the poset structure on $\R^n$.
For every $t\in\R$, the $t$-action $\Omega_t$ on $\R^n$ is defined as 
\begin{align*}
    \Omega_t:\R^n&\to\R^n\\
    (a_1,a_2,\ldots,a_n)&\mapsto (a_1+t,a_2+t,\ldots,a_n+t) \\
    (a_1,a_2,\ldots,a_n) \leq (b_1,b_2,\ldots,b_n)&\mapsto (a_1+t,a_2+t,\ldots,a_n+t) \leq (b_1+t,b_2+t,\ldots,b_n+t).
\end{align*}
We then easily check that $\Omega\coloneqq(\Omega_t)_{t\in\R}$ forms an $\R$-flow on $\R^n$.
\end{example}

We define the notion of \emph{$t$-shift of a subset $I$ of $\PP$} as 
\[I(t)\coloneqq \{p\in\PP \mid \Omega_{t}(p)\in I\} = \Omega_{-t}(I).\]

\begin{example}
The collection of all nonempty subsets of a poset~$\PP$, ordered by inclusion, together with the action of~$\Omega_{-t}$ on these subsets for all~$t \in \mathbb{R}$, provides another example of a poset with an~$\mathbb{R}$-flow.
\end{example}

The following propositions establish some useful properties of $\R$-flows.

\begin{proposition}
\label{prop:poset iso}
If $\Omega$ is an $\R$-flow, then for every $t\in\R$, the functor $\Omega_t:\PP\to\PP$ is a poset isomorphism.
\end{proposition}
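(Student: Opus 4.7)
The plan is to exhibit $\Omega_{-t}$ as a two-sided inverse for $\Omega_t$ that is itself a poset map, from which the conclusion follows immediately. There is essentially no geometric content here; the statement is a direct unpacking of the definition of an $\R$-flow, analogous to how any group action by monoid endomorphisms yields automorphisms.

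More precisely, I would proceed as follows. First, observe that by the definition of an $\R$-flow, every member of the family $\Omega = \{\Omega_t\}_{t \in \R}$, in particular $\Omega_{-t}$, is a poset map from $\PP$ to $\PP$, i.e.\ order-preserving. Next, invoke axioms (2) and (3) of the definition of an $\R$-flow: for any $t \in \R$,
\[
\Omega_t \circ \Omega_{-t} \;=\; \Omega_{t + (-t)} \;=\; \Omega_0 \;=\; \mathrm{id}_{\PP}
\qquad\text{and}\qquad
\Omega_{-t} \circ \Omega_t \;=\; \Omega_{(-t) + t} \;=\; \Omega_0 \;=\; \mathrm{id}_{\PP}.
\]
In particular, $\Omega_t$ is a bijection on the underlying set of $\PP$ with set-theoretic inverse $\Omega_{-t}$.

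Finally, to conclude that $\Omega_t$ is a poset isomorphism in the sense introduced earlier in the paper, note that both $\Omega_t$ and its inverse $\Omega_{-t}$ are order-preserving by the first observation above. Hence $\Omega_t$ is an order-preserving bijection whose inverse is order-preserving, which is precisely the definition of a poset isomorphism. No step here poses a genuine obstacle; the only subtlety worth stating explicitly in the writeup is that axiom (1) of an $\R$-flow (monotonicity in the time parameter) is not used, since the definition already requires each $\Omega_t$ individually to be a poset map.
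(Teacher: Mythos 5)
Your proof is correct and takes essentially the same approach as the paper: exhibiting $\Omega_{-t}$ as a two-sided inverse via the flow laws and then noting that both $\Omega_t$ and $\Omega_{-t}$ are order-preserving. Your closing remark that axiom (1) is never invoked is accurate and a nice observation, though the paper does not comment on it.
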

\begin{proof}
Fix $t\in\R$. Since $\R$ is a group, $-t\in\R$, and by the $\R$-flow law
\[
\Omega_t\circ\Omega_{-t}=\Omega_{t+(-t)}=\Omega_0=\id_\PP,
\qquad
\Omega_{-t}\circ\Omega_t=\id_\PP.
\]
Thus, $\Omega_t:\PP\to\PP$ is bijective with the inverse, $\Omega_{-t}$.

If $x\le y$, then order-preservation of $\Omega_t$ gives $\Omega_t(x)\le\Omega_t(y)$.
Conversely, if $\Omega_t(x)\le\Omega_t(y)$, then applying the order-preserving map 
$\Omega_{-t}$ yields
\[
x=\Omega_{-t}(\Omega_t(x)) \le \Omega_{-t}(\Omega_t(y))=y.
\]
Hence $x\le y \iff \Omega_t(x)\le \Omega_t(y)$, so $\Omega_t$ is an poset isomorphism.
\end{proof}

\begin{proposition}
Let $I\subset \PP$. Then, $I$ is an interval in $\PP$ if and only if $I(t)$ is an interval in $\PP$.
\end{proposition}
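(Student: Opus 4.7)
My plan is to reduce the statement directly to the fact that poset isomorphisms preserve interval structure, which was noted in the text immediately following the definition of poset morphisms.

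First, I would unpack the definition of the $t$-shift. By definition, $I(t) = \{p \in \PP \mid \Omega_t(p) \in I\}$, and the excerpt already observes that this equals $\Omega_{-t}(I)$. To make this identification rigorous, I would note that $q \in I$ corresponds under $\Omega_{-t}$ to the unique $p = \Omega_{-t}(q)$ satisfying $\Omega_t(p) = q$, using the flow laws $\Omega_t \circ \Omega_{-t} = \Omega_{-t} \circ \Omega_t = \id_\PP$.

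Next, I would invoke \cref{prop:poset iso}, which states that $\Omega_t$ (and therefore $\Omega_{-t}$) is a poset isomorphism. Combined with the remark preceding that proposition, namely that poset isomorphisms preserve both poset-convexity and poset-connectedness, this immediately implies that $\Omega_{-t}(I)$ is an interval whenever $I$ is. Thus one direction of the biconditional follows: if $I$ is an interval, then $I(t) = \Omega_{-t}(I)$ is an interval.

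For the converse, I would apply the same reasoning in reverse. If $I(t) = \Omega_{-t}(I)$ is an interval, then applying the poset isomorphism $\Omega_t$ yields $\Omega_t(I(t)) = \Omega_t(\Omega_{-t}(I)) = I$, which must again be an interval. There is no real obstacle here; the statement is essentially a restatement of the fact that $\Omega_t$ is an invertible symmetry of the poset. The only minor bookkeeping step is making the identification $I(t) = \Omega_{-t}(I)$ explicit, which is immediate from the $\R$-flow axioms.
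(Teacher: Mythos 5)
Your proof is correct and takes essentially the same approach as the paper's: both rely on \cref{prop:poset iso} (that $\Omega_{\pm t}$ are poset isomorphisms), the identity $I(t) = \Omega_{-t}(I)$ with inverse $I = \Omega_t(I(t))$, and the observation that poset isomorphisms preserve poset-convexity and poset-connectedness. Your version is a bit more explicit about the bookkeeping, but there is no substantive difference.
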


\begin{proof}
By \cref{prop:poset iso}, each map $\Omega_t:\PP\to\PP$ is a poset isomorphism.
Since $I(t)=\Omega_{-t}(I)$ and $I=\Omega_t(I(t))$, the claim follows from the
fact that poset isomorphisms preserve poset-convexity and poset-connectedness,
and therefore preserve interval structure.
\end{proof}

\begin{proposition}\label{prop:shift invariance}
Let $I,J$ be intervals in $\PP$ and $t\in\R$. Then, $I\cap J$ is an interval if and only if $I(t)\cap J(t)$ is an interval. 
\end{proposition}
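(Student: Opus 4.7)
The plan is to reduce this to the previous proposition (that $I$ is an interval iff $I(t)$ is an interval) by exhibiting $I(t)\cap J(t)$ as the image of $I\cap J$ under the shift map $\Omega_{-t}$, and then invoking the poset-isomorphism property of $\Omega_{-t}$.

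The first step is the set-theoretic identity
\[
I(t)\cap J(t) = \Omega_{-t}(I)\cap \Omega_{-t}(J) = \Omega_{-t}(I\cap J),
\]
which relies only on $\Omega_{-t}$ being a bijection on $\PP$ (guaranteed by \cref{prop:poset iso}). Since $I$ and $J$ are both intervals (in particular, poset-convex), \cref{lem:intersection of poset-convex} yields that $I\cap J$ is poset-convex, and similarly $I(t)\cap J(t)$ is poset-convex. So the only remaining question is whether poset-connectedness transfers across the two sides.

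Here the work has essentially already been done: $\Omega_{-t}:\PP\to\PP$ is a poset isomorphism by \cref{prop:poset iso}, and the remark at the end of \cref{sec:Modules Over a Poset} notes that poset isomorphisms preserve poset-convexity and poset-connectedness, and therefore interval structure. Applying this to the subsets $I\cap J$ and $\Omega_{-t}(I\cap J) = I(t)\cap J(t)$, we conclude that $I\cap J$ is poset-connected if and only if $I(t)\cap J(t)$ is. Combined with the poset-convexity observation, this yields the claimed equivalence of being an interval.

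There is no substantive obstacle: the entire argument is a direct transport-of-structure along the poset automorphism $\Omega_{-t}$, using $\Omega_t\circ\Omega_{-t}=\id_\PP$ to recover the reverse direction. I would write the proof as a two-or-three-line paragraph, citing \cref{prop:poset iso}, \cref{lem:intersection of poset-convex}, and the preceding proposition on shift-invariance of the interval property.
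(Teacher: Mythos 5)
Your proposal is correct and takes essentially the same route as the paper: both proofs reduce to the observation that $\Omega_{-t}$ is a poset isomorphism and that poset isomorphisms preserve poset-convexity and poset-connectedness (hence the interval property). Your version is a touch more explicit in writing out the identity $I(t)\cap J(t)=\Omega_{-t}(I\cap J)$, while the detour through Lem.~\ref{lem:intersection of poset-convex} to establish poset-convexity of both sides separately is not actually needed—once you have $I(t)\cap J(t)=\Omega_{-t}(I\cap J)$ and that $\Omega_{-t}$ preserves the interval property, the equivalence follows in one step.
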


\begin{proof}
    By \cref{def:interval}, $I\cap J$ is an interval if and only if it is poset-convex and poset-connected. Since the poset structure is preserved under shifts, $I\cap J$ is poset-convex (resp. poset-connected) if and only if $I(t)\cap J(t)$ is poset-convex (resp. poset-connected). Thus, $I\cap J$ is an interval if and only if $I(t)\cap J(t)$ is an interval.
\end{proof}

For the rest of this section, we fix a poset with an $\R$-flow $(\PP,\Omega)$. 

\subsection{Interleaving distance}
We define shift functors and transition morphisms on $\PP$-modules, induced by the $\R$-flow $\Omega$ on $\PP$, and then we define the associated interleaving distance on $\PP$-modules.

\begin{definition}[Shift Functors and Transition Morphisms]
For $t\in \R$, we define the \emph{shift functor} $(\cdot)(t): \Vect_K^{\PP}\to\Vect_K^{\PP}$ as follows: For
$M$ a $\PP$-module we let $M(t)$ be the $\PP$-module such that for all $p\in\PP$ we have
$M(t)_p\coloneqq M_{\Omega_t(p)}$, and for all $p\leq q$ in $\PP$ we have 
$\varphi_{M(t)}(p,q)\coloneqq \varphi_M(\Omega_t(p),\Omega_t(q))$. 
For a morphism $f$ in $\Vect_K^{\PP}$, we define $f(t)$ by taking $f(t)_{p}\coloneqq f_{\Omega_t(p)}$. 

For $M$ a $\PP$-module and $\delta\geq 0$, let the \emph{$\delta$-transition morphism}
$\varphi^{\delta}_M: M \to M(\delta)$ be the morphism whose restriction to $M_p$ is the $\mathbb{K}$-linear map $\varphi_M(p,\Omega_\delta(p))$, for all
$p\in\PP$.

If $M$ is interval-decomposable, then the $\delta$-transition morphism $\transition{\Mfunc}{\delta}$ of $M$ is
\begin{equation}\label{eq:transition matrix form}
    \transition{\Mfunc}{\delta}=\bigoplus_{I,I' \in  \barc(M)} \delta_{I,I'}\cdot\varphi_{C(I)}^{\delta}, \qquad \mbox{where} \qquad 
\delta_{I,I'}\coloneqq \left\{\begin{array}{rl}
1_{\field} & \text{if }I = I' \\
0_{\field} & \text{otherwise .} \end{array}\right.
\end{equation}
Here, $1_{\field}$ and $0_{\field}$ denote the identity and zero elements of the field $\field$, respectively.
\end{definition}

We now record some basic properties the shift functor. The proof of the following is immediate.

\begin{lemma}
\label{lem:invariant properties}
The shift functor has the following properties:
    \begin{enumerate}
\item for any $t\in\R$ and any interval $I$ in $\PP$, one has $(\Cfunc(I))(t)=\Cfunc(I(t))$,
    \item for any morphism $f:\Cfunc(I)\to \Cfunc(J)$ of interval $\PP$-modules and any $t\in\R$, the shifted morphism $f(t)$ is nonzero if and only if $f$ is nonzero,
    \item for any interval-decomposable $\PP$-module $M$ and any $t\in\R$, the barcode satisfies $B(M(t))=\{I(t)\mid I\in B(M)\}$,
    \item for any morphism $f\cong\bigoplus f_{I,J}$ of interval-decomposable $\PP$-modules $M$ and $N$, and any $t\in\R$, we have 
    \[
    (f(t))_{I(t),J(t)} \cong (f_{I,J})(t).
    \]
    \end{enumerate}
\end{lemma}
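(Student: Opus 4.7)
The plan is to verify each of the four items by unwinding the pointwise definition of the shift functor and exploiting the fact, established in Prop.~\ref{prop:poset iso}, that $\Omega_t: \PP \to \PP$ is a poset isomorphism with inverse $\Omega_{-t}$. Items 1 and 2 are the substantive base cases; items 3 and 4 follow from them together with the additivity of the shift functor and the uniqueness of interval decompositions.

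For item 1, I would check equality of the two $\PP$-modules $(\Cfunc(I))(t)$ and $\Cfunc(I(t))$ on objects and on structure maps. Pointwise, $(\Cfunc(I))(t)_p = \Cfunc(I)_{\Omega_t(p)} = \field$ precisely when $\Omega_t(p) \in I$, equivalently when $p \in \Omega_{-t}(I) = I(t)$, which matches $\Cfunc(I(t))_p$; likewise $(\Cfunc(I))(t)$ has identity structure maps exactly where both endpoints of the shifted pair lie in $I$, and order-preservation of $\Omega_t$ translates this to exactly where both endpoints of the original pair lie in $I(t)$. For item 2, the key observation is that $f(t)_p = f_{\Omega_t(p)}$, so the family $\{f(t)_p\}_{p \in \PP}$ is merely a reindexing of $\{f_q\}_{q \in \PP}$ via the bijection $\Omega_t$; hence some $f(t)_p$ is nonzero if and only if some $f_q$ is nonzero.

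Item 3 follows because the shift functor is pointwise-defined and therefore commutes with direct sums: if $\Mfunc \cong \bigoplus_{I \in \barc(\Mfunc)} \Cfunc(I)$, then $\Mfunc(t) \cong \bigoplus_{I \in \barc(\Mfunc)} (\Cfunc(I))(t) \cong \bigoplus_{I \in \barc(\Mfunc)} \Cfunc(I(t))$ by item 1. The Azumaya--Krull--Remak--Schmidt uniqueness of barcodes then gives $\barc(\Mfunc(t)) = \{I(t) \mid I \in \barc(\Mfunc)\}$. Item 4 is the morphism-level analog: additivity of the shift functor transports a decomposition $f \cong \bigoplus_{(I,J)} f_{I,J}$ to $f(t) \cong \bigoplus_{(I,J)} (f_{I,J})(t)$, and by item 3 the resulting decomposition of $f(t)$ is indexed by the pairs $(I(t), J(t))$ as $(I,J)$ ranges over $\barc(\Mfunc) \times \barc(\Nfunc)$. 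Matching components yields $(f(t))_{I(t), J(t)} \cong (f_{I,J})(t)$.

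I do not anticipate a real obstacle---the author's comment that the proof is immediate is accurate. The only place that warrants a moment of care is item 4, where one must check that ``the component of index $(I(t), J(t))$ in the decomposition of $f(t)$'' is a well-defined object; this is guaranteed by item 3 together with the uniqueness of interval decompositions via \eqref{eqn:morphism_decomposition}. Everything else reduces to applying $\Omega_t = \Omega_{-t}^{-1}$ to both sides of a set-theoretic equation.
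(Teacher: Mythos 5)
Your proof is correct and matches the paper's intent: the paper itself omits the proof, calling it immediate, and your pointwise verification of items 1--2 followed by the additivity/uniqueness argument for items 3--4 is exactly the expected route.
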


Finally, we introduce the notion of interleaving distance between $\PP$-modules.

\begin{definition}[{Interleaving Distance~\cite{bubenik2015metrics}}]
\label{dfn:interleaving distance}
 Let $\Mfunc,\Nfunc$ be a pair of $\PP$-modules.
\begin{itemize}
    \item We say that $M,N$ are \emph{$\e$-interleaved} if there exists a pair of morphisms $f:\Mfunc\to\Nfunc(\e)$ and $g:\Nfunc\to\Mfunc(\e)$ such that the triangles \begin{equation*}
	\begin{tikzcd}
	\Mfunc
	\arrow[dr,  "{\transition{\Mfunc}{2\e}}"']
	\arrow[r,  swap, "{f}"']
	\&
	\Nfunc(\e)
	\arrow[d,  crossing over,  "{g(\e)}"]
	\&
	\&
	\Nfunc
	\arrow[r,  swap, "{g}"']
	\arrow[dr,    "{\transition{\Nfunc}{2\e}}"']
	\&
	\Mfunc(\e) 
	\arrow[d,    "{f(\e)}"]
	\\
	\&
	\Mfunc(2\e)
	\&
	\&
	\&
	\Nfunc(2\e)
	\end{tikzcd}
	\end{equation*}
	commute.
	\item We say that the ordered pair $(\Mfunc ,\Nfunc)$ are \emph{left (right) $\e$-interleaved} if there exists a pair of morphisms $f:\Mfunc \to \Nfunc(\e)$ and $g:\Nfunc \to \Mfunc(\e)$ which satisfy the left (right) diagram.
	\item 	The \emph{interleaving distance} between $\Mfunc$ and $\Nfunc$ is defined as 
	\[d_{\mathrm{I}}(\Mfunc,\Nfunc)\coloneqq\inf\{\e\geq0|\text{ }\Mfunc,\Nfunc\text{ are }\e\text{-interleaved}\}\] 
	If $\Mfunc,\Nfunc$ are not $\e$-interleaved for any $\e\geq0$, we define their interleaving distance to be $\infty$.
	\end{itemize}
\end{definition}

\subsection{Hausdorff and bottleneck distances}

We now recall the notions of Hausdorff and bottleneck distances between interval-decomposable $\PP$-modules. The Hausdorff distance is a ubiquitious tool in geometry and topology, and our definition is an instance of the usual notion (see, e.g.,~\cite{hausdorff1978grundzuge,burago2001course}). The bottleneck distance first appeared in the context of one-parameter persistent homology (i.e., for $\R$-modules)---see~\cite{frosini2001size,cohen2007stability}---and has since been generalized to the version that we consider below~\cite{bjerkevik2016stability}.

Throughout this subsection, fix such modules $M$ and $N$. Recall that we consider their barcodes $B(M)$ and $B(N)$ as multisets.

\begin{definition}[$\e$-Correspondence, Hausdorff and Bottleneck Distances]\label{def:bottleneck}
    Let $\e > 0$. A sub-multiset $\sigma \subset B(M) \times B(N)$ is called an \emph{$\e$-correspondence} if 
    \begin{itemize}
        \item for each $(I,J) \in \sigma$, $C(I)$ and $C(J)$ are $\e$-interleaved;
        \item if $I \in B(M)$ does not belong to any ordered pair in $\sigma$, then $C(I)$ is $\e$-interleaved with the zero module;
        \item if $J \in B(N)$ does not belong to any ordered pair in $\sigma$, then $C(J)$ is $\e$-interleaved with the zero module,
    \end{itemize}
    where the above are all considered with  multiplicity. We define the \emph{Hausdorff distance} between $M$ and $N$ as 
    \[
    d_\mathrm{H}(M,N) \coloneqq \inf \{\e \geq 0 \mid \mbox{there exists an $\e$-correspondence between $M$ and $N$}\}.
    \]
    
    We say that $\sigma$ is an \emph{$\e$-matching} if it satisfies the conditions above, as well as 
    \begin{itemize}
        \item for each $I \in B(M)$, there is at most one $J \in B(N)$ such that $(I,J) \in \sigma$;
        \item for each $J \in B(N)$, there is at most one $I \in B(M)$ such that $(I,J) \in \sigma$,
    \end{itemize}
    where these are once again understood to hold with multiplicity. The \emph{bottleneck distance} between $M$ and $N$ is then defined as 
    \[
    d_\mathrm{B}(M,N) \coloneqq \inf \{\e \geq 0 \mid \mbox{there exists an $\e$-matching between $M$ and $N$}\}.
    \]
\end{definition}

\begin{remark}
\label{rem:hausdorff remk}
    Consider the metric space $(X,d_\mathrm{I})$, where $X$ is the set of interval modules over $\PP$. 
    With $\vZero$ denoting the zero module, one can check that our definition of Hausdorff distance in Defn.~\ref{def:bottleneck}  is equivalent to the usual notion of Hausdorff distance (induced by $(X,d_\mathrm{I})$) between the sets 
    \[
    \{C(I) \mid I \in B(M)\} \cup \{\mathbf{0}\} \quad \mbox{and} \quad \{C(J) \mid J \in B(N)\} \cup \{\mathbf{0}\};
    \]
    here, we really mean the underlying \emph{sets}, rather than multisets. See also \cite[Lem.~3.1]{memoli2012some}.
\end{remark}

As $\e$-matchings are special cases of $\e$-correspondences, one immediately has the following bound.

\begin{proposition}
\label{prop:hausdorff bottleneck bound}
For any pair of interval-decomposable $\PP$-modules $M,N$, we have
\[
d_{\mathrm{H}}(M,N)\leq d_\mathrm{B}(M,N).
\]
\end{proposition}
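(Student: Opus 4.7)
The plan is to observe that the statement is essentially a tautological consequence of how the two distances are defined: an $\varepsilon$-matching is by definition an $\varepsilon$-correspondence together with two additional uniqueness constraints (each $I \in B(M)$ appears in at most one pair, and similarly for each $J \in B(N)$). Consequently, every $\varepsilon$-matching is an $\varepsilon$-correspondence, so the set of $\varepsilon \geq 0$ admitting an $\varepsilon$-matching is contained in the set of $\varepsilon \geq 0$ admitting an $\varepsilon$-correspondence.

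Concretely, I would fix $\varepsilon \geq 0$ and suppose $\sigma \subset B(M) \times B(N)$ is an $\varepsilon$-matching. I would then verify directly from Defn.~\ref{def:bottleneck} that the three bullet points defining an $\varepsilon$-correspondence are satisfied by $\sigma$: the first bullet (that $C(I)$ and $C(J)$ are $\varepsilon$-interleaved for $(I,J) \in \sigma$) and the second and third bullets (that unmatched intervals are $\varepsilon$-interleaved with $\vZero$) are exactly among the defining conditions of an $\varepsilon$-matching. Hence $\sigma$ qualifies as an $\varepsilon$-correspondence.

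Taking the infimum over all $\varepsilon$ for which an $\varepsilon$-matching exists then yields
\[
d_{\mathrm{H}}(M,N) = \inf\{\varepsilon \geq 0 \mid \exists \text{ $\varepsilon$-correspondence}\} \leq \inf\{\varepsilon \geq 0 \mid \exists \text{ $\varepsilon$-matching}\} = d_{\mathrm{B}}(M,N),
\]
with the convention that the infimum of the empty set is $\infty$, so the inequality holds vacuously if no matching ever exists. There is no genuine obstacle here; the only thing to check carefully is that the multiset interpretation is consistent between the two definitions, i.e., that when a matching is viewed as a sub-multiset of $B(M) \times B(N)$, the unmatched-interval clauses are interpreted with the same multiplicities in both definitions. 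Since Defn.~\ref{def:bottleneck} uses the same multiset conventions for both notions, this is immediate.
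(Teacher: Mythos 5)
Your proposal is correct and matches the paper's (one-line) argument exactly: every $\varepsilon$-matching is by definition an $\varepsilon$-correspondence, so the infimum defining $d_{\mathrm{H}}$ is taken over a superset of the one defining $d_{\mathrm{B}}$.
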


\section{Intersection-closed families of intervals}
\label{sec:The Geometry of Convex Interval Modules}
We restrict our attention to interval-decomposable $\PP$-modules whose 
constituent intervals lie in a family \(\FF\) of intervals that is closed under 
intersections. Within this setting, we characterize existence of interleavings between 
interval modules in purely geometric terms—namely, via the structure of their 
interval intersections.

Although our results here can be extended to general interval-decomposable $\PP$-modules—by working 
with the interval components of intersections of arbitrary intervals—we focus on 
the intersection-closed setting, as this is the class most relevant for our 
intended study of interleavings. 
For corresponding characterizations of interleavings 
in the full interval-decomposable setting specifically over the poset \(\PP = \mathbb{R}^n\), we refer 
to \cite{dey2018computing}.

\subsection{Intersection-closed families of intervals}
In general, an intersection of a pair of intervals in a poset $\PP$ is either the empty set or a disjoint union of intervals. However, there are families of intervals such that the intersection of any finite collection of intervals is again an interval or the empty set.

\begin{definition}[Properties of Interval Families]
A family $\FF$ of intervals in $\PP$  is said to be:
\begin{itemize}
    \item \emph{intersection-closed}, if for every pair of intervals $I,J\in \FF$,  $I\cap J$ is an interval in $\FF$. 
    \item \emph{closed under the action of $\Omega$}, if for any interval $I$ in $\FF$ and any $t\in\R$, $\Omega_t(I)\in \FF$.
\end{itemize}
\end{definition}

\begin{example}\label{ex:intersection_closed}
    Examples of intersection-closed families of intervals in $\PP$ are
    \begin{itemize}
        \item the family of uppersets in $\PP$,
        \item the family of downsets in $\PP$,
        \item the family of intervals, if we assume $\PP$ is a totally ordered set,
                   \item the family of geometrically convex intervals in $\R^n$, for $n\geq2$ (defined precisely below),
                \item the family of intervals in the sub-poset $\mathbb{U}:=\{(a,b)\in \R^{op}\times \R\mid a\leq b\}$ of $\R^{op}\times \R$,
                 \item the family of intervals in the sub-poset $\mathbb{ZZ}:=\{(a,b) \in \Z^{op}\times \Z\mid a = b\text{ or }a =b +1\}$ of $\mathbb{Z}^{op}\times \Z$,
    \item the family of rectangles in $\Z^n$, for $n\geq2$.
    \end{itemize}  
        One can check, that the first five families above are also closed under the action of the respective poset-$\R$-flows on their intervals. 
        If we take the $\FF\coloneqq\mathbf{Int}(\PP)$ to be the family of all intervals in $\PP$, then this is not always intersection-closed and not always closed under the action of the poset-$\R$-flow $\Omega$ on intervals (e.g.~for $\PP\coloneqq\R^2$).  
\end{example}

\begin{figure}[H]
\centering

\begin{subfigure}{.47\textwidth}
\centering
    \begin{tikzpicture}[scale=0.65] 
    \fill[blue!15](0,0) rectangle (3,3);
    \end{tikzpicture}
    \hspace{0.55em}
    \begin{tikzpicture}[scale=0.65] 
    \fill[blue!15] (0,2) -- (0,3) -- (1,3) -- (3,1) -- (3,0) -- (2,0) -- cycle;
    \end{tikzpicture}
    \begin{tikzpicture}[scale=0.65] 
    \fill[fill=blue!15]
   (-0.5,3) 
   .. controls (2,2.8) and (3,2) .. (3,0)
   .. controls (0,0) and (-0.5,1) .. (-0.5,3);
    \end{tikzpicture}
\caption{}
\label{fig:2D_convex}
\end{subfigure}
\begin{subfigure}{.12\textwidth}
\centering
    \begin{tikzpicture}[scale=0.65] 
    \fill[blue!25] (0,1) -- (0,3) -- (2,3) -- (2,2) -- (3,2) -- (3,0) -- (1,0) -- (1,1) -- cycle;
    \end{tikzpicture}
\caption{}
\label{fig:2D_non_convex}
\end{subfigure}
\begin{subfigure}{.31\textwidth}
\centering
    \begin{tikzpicture}[scale=0.65] 
    \fill[red!15] (3,3) -- (3, 2) -- (1,0) -- (0,0) -- (0,1) -- (2,3) -- cycle;
    \end{tikzpicture}
    \begin{tikzpicture}[scale=0.65] 
    \fill[red!15]
   (0.5,3) 
   .. controls (-2,2.8) and (-3,2) .. (-3,0)
   .. controls (0,0) and (0.5,1) .. (0.5,3);
    \end{tikzpicture}
\caption{}
\label{fig:2D_non_interval}
\end{subfigure}%

\caption{(a) Geometrically convex intervals in $\R^2$. (b) An interval in $\R^2$ that is not geometrically convex. (c) Geometrically convex subsets of $\R^2$ that are not intervals.}

\label{fig:examples_convex}
\end{figure}
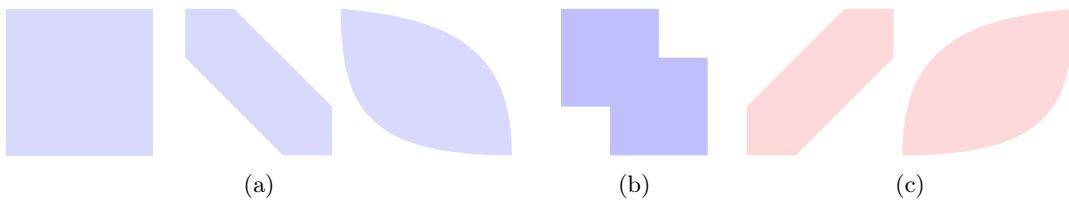

We now focus on a particularly natural family of intervals. By a \emph{(geometrically) convex interval} in $\R^n$, we mean a  poset interval in $\R^n$, which is topologically open and geometrically convex as subset of $\R^n$.

\begin{proposition}\label{prop:convex_intersection}
Let $I\subset\R^n$ be nonempty, open, and geometrically convex. Then $I$ is
poset-connected. 
In particular, if $I$ and $J$ are convex intervals in
$\R^n$, then their intersection $I\cap J$ is either empty or a convex interval.
\end{proposition}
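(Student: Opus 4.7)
The plan is to prove poset-connectedness of any nonempty, open, geometrically convex $I \subset \R^n$ via a local-to-global zigzag construction, and then to deduce the intersection claim immediately. The central geometric observation I would use is the following: if $x, y$ both lie in an $\ell^\infty$-ball $B$ (i.e., an axis-aligned open box), then their coordinatewise join $x \vee y$ also lies in $B$, since $\max(x_j, y_j)$ is sandwiched between the coordinate bounds of $B$ for each $j$. When $B \subset I$, this gives a length-$2$ zigzag $x \leq x \vee y \geq y$ entirely inside $I$.

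To establish poset-connectedness, I would fix $p,q \in I$ and consider the Euclidean segment $[p,q]$, which is contained in $I$ by geometric convexity and is compact. Openness of $I$ makes the function $x \mapsto d_{\ell^\infty}(x, \R^n \setminus I)$ continuous and strictly positive on $[p,q]$, hence bounded below by some $\delta > 0$. Subdividing the segment into $p = x_0, x_1, \ldots, x_N = q$ with $\|x_i - x_{i+1}\|_\infty < \delta$ places each consecutive pair inside a common $\ell^\infty$-ball of radius $\delta$ around $x_i$, which lies in $I$. Applying the observation above to each pair and concatenating then yields the zigzag
\[
p = x_0 \leq x_0 \vee x_1 \geq x_1 \leq x_1 \vee x_2 \geq \cdots \leq x_{N-1} \vee x_N \geq x_N = q
\]
in $I$, establishing poset-connectedness.

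For the second assertion, I would note that $I \cap J$ is poset-convex by Lem.~\ref{lem:intersection of poset-convex}, geometrically convex as an intersection of convex subsets of $\R^n$, and open as an intersection of open sets. Applying the first part of the proposition to $I \cap J$ (when nonempty) supplies poset-connectedness, so $I \cap J$ is itself a convex interval.

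The only mildly nontrivial step is the local join construction, which genuinely uses both openness and geometric convexity: geometric convexity yields the segment $[p,q] \subset I$ as a continuous path to subdivide, while openness provides the uniform room required for the joins $x_i \vee x_{i+1}$ to stay inside $I$. Without openness the join of two nearby points of a geometrically convex set can escape the set, so the argument depends essentially on controlling the $\ell^\infty$ distance to the boundary of $I$ and then promoting the resulting local zigzags to a global one via compactness of $[p,q]$.
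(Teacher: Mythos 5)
Your proposal is correct and follows essentially the same route as the paper's proof: compactness of the segment $[p,q]$ together with openness of $I$ yields a uniform $\ell^\infty$-margin, the segment is subdivided into pieces each contained in an $\ell^\infty$-ball inside $I$, the coordinatewise join makes each such ball poset-connected, and concatenation gives the zigzag. The only cosmetic difference is that the paper first dispatches the case of comparable $p,q$ separately, whereas your argument handles all cases uniformly.
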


\begin{proof}
Fix $p,q\in I$. If $p$ and $q$ are comparable, then $p\le q$ or $q\le p$, so the one-step path $p\le q$ (or $q\le p$) lies in $I$ and we are done. Assume henceforth that $p$ and $q$ are incomparable.

The line segment $\overline{pq}$ is contained in $I$ (by geometric convexity) and is compact. For each $x\in\overline{pq}$, let $\rho(x)\coloneqq\operatorname{dist}_\infty(x,\R^n\setminus I)$ be the $\ell^\infty$-distance from $x$ to the complement of $I$. Since $\rho$ is continuous and positive on the compact set $\overline{pq}$, it attains a positive minimum $\e>0$ (by the extreme value theorem).
Thus, for every $x\in\overline{pq}$, the open ball $B_\infty(x,\e)$ is contained in $I$. 

Partition $\overline{pq}$ into $K$ equal-length subsegments with endpoints $p=p_0,p_1,\ldots,p_K=q$, where $K$ is chosen large enough that $d_\infty(p_{i-1},p_i)<\e$ for all $i=1,\ldots,K$. 
Then each ball $B_\infty(p_i,\e)$ contains both $p_{i-1}$ and $p_{i}$, and hence the line segment $\overline{p_{i-1}p_{i}}$. 
Note that every $\ell^\infty$–ball is poset-connected: for any
$a,b$ in such a ball, their coordinatewise maximum also lies in the ball,
yielding a two–step path $a\le \max(a,b)\ge b$.
Thus each consecutive pair $p_{i-1},p_i$ may be joined by a path
lying in $I$, and concatenating these paths produces a
path from $p$ to $q$ in $I$.

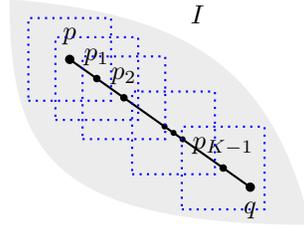
\begin{figure}[H]
\centering
\begin{tikzpicture}[scale=1]

\fill[fill=gray!15]
   (-0.5,3) 
   .. controls (2,2.8) and (3,2) .. (3.5,0)
   .. controls (0,0) and (-0.5,1) .. (-0.5,3);

\coordinate (p) at (0.3,2.2);
\coordinate (q) at (2.7,0.5);

\node[fill=black,circle,inner sep=1.25pt,label=above:$p$] at (p) {};
\node[fill=black,circle,inner sep=1.25pt,label=below:$q$] at (q) {};

\draw[thick] (p) -- (q);

\def\size{1.1}
\draw[blue,dotted,thick] ($(p)+(-\size/2,-\size/2)$) -- ($(p)+(\size/2,-\size/2)$) -- ($(p)+(\size/2,\size/2)$) -- ($(p)+(-\size/2,\size/2)$) -- cycle;

\coordinate (p1) at ($(p)!0.15!(q)$);
\coordinate (p2) at ($(p)!0.3!(q)$);
\coordinate (p3) at ($(p)!0.85!(q)$);
\coordinate (p4) at ($(p)!0.575!(q)$);

\node[circle,fill=black,inner sep=1pt,label=above:$p_1$] at (p1) {};
\node[circle,fill=black,inner sep=1pt,label=above:$p_2$] at (p2) {};
\node[circle,fill=black,inner sep=1pt,label=above:$p_{K-1}$] at (p3) {};

\node[circle,fill=black,inner sep=0.8pt] at ($(p)!0.525!(q)$) {};
\node[circle,fill=black,inner sep=0.8pt] at (p4) {};
\node[circle,fill=black,inner sep=0.8pt] at ($(p)!0.625!(q)$) {};

\draw[blue,dotted,thick] ($(p1)+(-\size/2,-\size/2)$) -- ($(p1)+(\size/2,-\size/2)$) -- ($(p1)+(\size/2,\size/2)$) -- ($(p1)+(-\size/2,\size/2)$) -- cycle;
\draw[blue,dotted,thick] ($(p2)+(-\size/2,-\size/2)$) -- ($(p2)+(\size/2,-\size/2)$) -- ($(p2)+(\size/2,\size/2)$) -- ($(p2)+(-\size/2,\size/2)$) -- cycle;
\draw[blue,dotted,thick] ($(p3)+(-\size/2,-\size/2)$) -- ($(p3)+(\size/2,-\size/2)$) -- ($(p3)+(\size/2,\size/2)$) -- ($(p3)+(-\size/2,\size/2)$) -- cycle;
\draw[blue,dotted,thick] ($(p4)+(-\size/2,-\size/2)$) -- ($(p4)+(\size/2,-\size/2)$) -- ($(p4)+(\size/2,\size/2)$) -- ($(p4)+(-\size/2,\size/2)$) -- cycle;

\node at (2,2.8) {$I$};

\end{tikzpicture}
\caption{An open and geometrically convex $I$, two points $p,q\in I$, subdivision points $p_i$ on the line $\overline{pq}$, and a pruned chain of open $\ell_\infty$-balls $B_\infty(p_i,\e)\subset I$ with overlapping intersections along the segment.}
\label{fig:convep_open_set}
\end{figure}

Next, let $I,J$ be convex intervals in $\R^n$. 
If $ I\cap J = \varnothing$, we are done.
Otherwise, the intersection $I\cap J$ is open and geometrically convex, as both $I$ and $J$ are.
By the above, $I\cap J$ is poset-connected. 
Since $I\cap J$ is also poset-convex, it is a convex interval. 
\end{proof}

\begin{remark}
    A priori, convex intervals are open, poset-convex,
poset-connected, and geometrically convex subsets of $\R^n$.  
However, by
\cref{prop:convex_intersection}, the poset-connectedness requirement is redundant.
Thus, convex intervals are precisely those open subsets of $\R^n$ that are
geometrically and poset-convex.

    We note that the openness requirement is essential for ensuring the intersection-closure property of convex intervals. 
    Without it, two closed convex intervals (such as the one shown in the middle of \cref{fig:2D_convex}) can intersect in a line segment of nonzero finite slope, which fails to be an interval in $\R^2$ (see \cref{ex:R_n_poset}).
\end{remark}

We have the following immediate corollary (verifying the last example in Ex.~\ref{ex:intersection_closed}).

\begin{corollary}
\label{Rn_conv_d}
The family $\mathcal{F}$ containing all convex intervals in $\R^n$ is intersection-closed and closed under the action of the standard flow (Ex.~\ref{ex:R_flow}).
\end{corollary}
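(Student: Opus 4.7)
My plan is to dispatch the two claims separately, leveraging Prop.~\ref{prop:convex_intersection} for most of the heavy lifting.

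For intersection-closedness, nothing new is needed: Prop.~\ref{prop:convex_intersection} already asserts that if $I,J$ are convex intervals in $\R^n$, then $I\cap J$ is either empty or a convex interval. So the family $\FF$ of convex intervals is trivially intersection-closed.

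For closure under the standard flow $\Omega$ from Ex.~\ref{ex:R_flow}, I would fix a convex interval $I\in\FF$ and $t\in\R$, and verify that $\Omega_t(I)$ is still open, geometrically convex, and poset-convex. By the remark following Prop.~\ref{prop:convex_intersection}, these three properties together characterize convex intervals (poset-connectedness follows for free), so this would suffice. The key observation is that $\Omega_t$ acts by diagonal translation $(a_1,\ldots,a_n)\mapsto(a_1+t,\ldots,a_n+t)$: this is a homeomorphism, hence preserves openness; it is an affine bijection of $\R^n$, hence preserves geometric convexity; and by Prop.~\ref{prop:poset iso} it is a poset isomorphism of $\R^n$, hence preserves poset-convexity.

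Neither step presents any real obstacle: all of the work has already been done in Prop.~\ref{prop:convex_intersection} (for intersection-closure) and Prop.~\ref{prop:poset iso} together with elementary facts about translations (for flow-closure). The only mildly subtle point is ensuring we invoke the right characterization of convex intervals after the translation, so that we do not need to re-prove poset-connectedness by hand.
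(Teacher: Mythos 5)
Your proof is correct and takes the same approach the paper implicitly intends: the paper states this corollary as immediate from Prop.~\ref{prop:convex_intersection} without spelling out the flow-closure half, and your argument (translations are open, affine, and poset isomorphisms, so they preserve all three defining properties of a convex interval) is the natural fill-in.
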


\subsection{Geometric characterization of interleavings for interval modules}

Dey et al.~\cite{dey2018computing} established a necessary and sufficient criterion for interleavings of discretely presented interval $\R^n$-modules. In this section, we present an analogous characterization for interleavings of arbitrary interval $\PP$-modules. The results are stated under the assumption that $I$ and $J$ are intervals such that $I \cap J$ is also an interval. In particular, the results apply if $I,J \in \mathcal{F}$ for some family $\mathcal{F}$ of intersection-closed intervals which is closed under the flow $\Omega$.

\begin{definition}[$\e$-Trivial and Significant Intervals]
Let $\e\geq0$.
An interval $I$ in $\PP$ is called \emph{$\e$-trivial} if the $\e$-transition morphism $\transition{\Cfunc(I)}{\e}:\Cfunc(I)\to \Cfunc(I(\e))$ is the zero morphism. 
If $I$ is not $\e$-trivial, then it is called \emph{$\e$-significant}.
\end{definition}

\begin{lemma}
\label{lem:geometric condition for left interleaving}
Let $\e\geq0$ and let $I,J \subset \PP$ be intervals such that $I \cap J$ is an interval. 
Then $(\Cfunc(I),\Cfunc(J))$ are left $\e$-interleaved if and only if either $I$ is $2\e$-trivial, or the following three conditions hold:
\begin{enumerate}
    \item $I\cap J(\e)$ is $(I,J(\e))$-valid (equivalently, there exists a nonzero morphism $C(I) \to C(J(\e))$),
     \item $J\cap I(\e)$ is $(J,I(\e))$-valid (equivalently, there exists a nonzero morphism $C(J) \to C(I(\e))$), and
     \item 
     $\varnothing\neq I\cap I(2\e)\subset J(\e)$.
     \end{enumerate}
\end{lemma}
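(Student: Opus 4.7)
The plan is to unwind the left $\e$-interleaving condition into the single commuting triangle
\[
g(\e)\circ f \;=\; \transition{\Cfunc(I)}{2\e},
\]
where $f:\Cfunc(I)\to\Cfunc(J(\e))$ and $g:\Cfunc(J)\to\Cfunc(I(\e))$, and $g(\e):\Cfunc(J(\e))\to\Cfunc(I(2\e))$ is the shifted morphism produced by \cref{lem:invariant properties}. The transition $\transition{\Cfunc(I)}{2\e}$ vanishes precisely when $I$ is $2\e$-trivial, so the dichotomy in the statement is forced: either the target morphism is zero (first case, trivially realized by $f=g=0$), or the target is a specific nonzero morphism that must be expressible as a composition (second case).

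For the $(\Rightarrow)$ direction in the $2\e$-significant case, $g(\e)\circ f$ is nonzero, hence both $f$ and $g(\e)$ are nonzero, and by \cref{lem:invariant properties} so is $g$. Applying \cref{cor:nonzeromorphismiffvalid} to $f$ and to $g$ yields conditions (1) and (2). For condition (3), I would apply \cref{prop:nonzero composition} to the triple $(I,J(\e),I(2\e))$: its nonzero-composition criterion reads precisely $\varnothing\neq I\cap I(2\e)\subset J(\e)$.

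For the $(\Leftarrow)$ direction in the $2\e$-significant case, \cref{cor:nonzeromorphismiffvalid} applied to conditions (1) and (2) produces nonzero morphisms $f$ and $g$. By \cref{prop:equivalent condition}, each is determined by a single nonzero scalar, $\omega(f)$ on $I\cap J(\e)$ and $\omega(g)$ on $J\cap I(\e)$. Rescaling $f$, I arrange $\omega(g)\omega(f)=1$. Then pointwise $(g(\e)\circ f)_p=\omega(g)\omega(f)=1$ for $p\in I\cap J(\e)\cap I(2\e)$ and equals $0$ otherwise; by condition (3), this set coincides with $I\cap I(2\e)$, on which the transition $\transition{\Cfunc(I)}{2\e}$ also acts as the identity. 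Hence $g(\e)\circ f=\transition{\Cfunc(I)}{2\e}$.

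The main obstacle is the exactness in $(\Leftarrow)$: \cref{prop:nonzero composition} guarantees only nonvanishing of the composition, whereas one needs equality with the transition, not merely some nonzero morphism. This is resolved by the observation that, once the relevant intersections are intervals (as they are in the intended intersection- and flow-closed setting), every morphism in play is pinned down by a single scalar, so matching the composition to the transition reduces to matching one element of $\field^\times$, which can always be arranged by rescaling $f$. The forward direction, by contrast, is essentially bookkeeping on top of the geometric characterizations already established in Section~3.
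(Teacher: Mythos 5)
Your proof is correct and follows essentially the same route as the paper's. Both directions hinge on the same dichotomy ($2\e$-trivial vs.\ $2\e$-significant), both use \cref{cor:nonzeromorphismiffvalid} to translate nonzeroness of $f$ and $g$ into conditions (1) and (2), and both invoke \cref{prop:nonzero composition} on the triple $(I, J(\e), I(2\e))$ to obtain condition (3). Where your argument adds a bit of useful detail is in the $(\Leftarrow)$ direction: the paper simply asserts that choosing each of $f,g$ with scalar $1_{\field}$ makes $g(\e)\circ f$ coincide with $\transition{\Cfunc(I)}{2\e}$, whereas you spell out the support computation explicitly---the composition is nonzero precisely on $I\cap J(\e)\cap I(2\e)$, which by condition (3) equals $I\cap I(2\e)$, the support of the transition---and observe that any nonzero choice of $\omega(g)$ can be absorbed by rescaling $f$. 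This is the right justification for the paper's one-line claim and fills in the step that \cref{prop:nonzero composition} alone only gives nonvanishing, not equality with the transition. No gap, just a slightly more explicit version of the same argument.
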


\begin{proof}
    We first note that the equivalence statements in conditions 1 and 2 follow by Corollary \ref{cor:nonzeromorphismiffvalid}.
    
    Assume that $\Cfunc(I),\Cfunc(J)$ are left $\e$-interleaved.
    Then there exist morphisms 
    $f:\Cfunc(I)\to \Cfunc(J(\e))$ and $g:\Cfunc(J) \to \Cfunc(I(\e))$ satisfying
    $g(\e)\circ f=\transition{\Cfunc(I)}{2\e}$. 
    If $I$ is $2\e$-trivial, then we are done.
    So, suppose on the other hand that $I$ is not $2\e$-trivial, i.e., the transition morphism $\transition{\Cfunc(I)}{2\e}$ is nonzero. 
    Hence $g(\e)\circ f$ is nonzero, and consequently both $f$ and $g(\e)$ are nonzero, which gives the first two conditions, and the last condition follows by Prop.~\ref{prop:nonzero composition}. 
    
    To prove the converse, first note that if $I$ is $2\e$-trivial then zero maps define a left $\e$-interleaving. Suppose, on the other hand, that $I$ is $2\e$-significant and that $I,J$ satisfy conditions $1-3$. Let $f:C(I) \to C(J(\e))$ and $g:C(J) \to C(I(\e))$ be the morphisms which are guaranteed by the first two conditions.  
    Then, the conclusion follows by applying Prop.~\ref{prop:nonzero composition} to the morphisms $f$ and $g(\e)$, noting that these morphisms can be chosen each with scalar $1_\K$, so that the composition is $g(\e) \circ f$ coincides with the morphism $\varphi_{C(I)}^{2\e}$.
\end{proof}
As a direct consequence of Lem.~\ref{lem:geometric condition for left interleaving}, we derive the following criterion.
\begin{proposition}
\label{prop:geometric condition for interleaving}
Let $\e\geq0$ and let $I,J\subset \PP$ be $2\e$-significant intervals such that $I \cap J$ is an interval. Then $\Cfunc(I),\Cfunc(J)$ are $\e$-interleaved if and only if 
the following four conditions hold:
\begin{enumerate}
   \item $I\cap J(\e)$ is $(I,J(\e))$-valid (equivalently, there exists a nonzero morphism $C(I) \to C(J(\e))$),
     \item $J\cap I(\e)$ is $(J,I(\e))$-valid (equivalently, there exists a nonzero morphism $C(J) \to C(I(\e))$), 
         \item $\varnothing \neq I\cap I(2\e)\subset J(\e)$, 
         \item $\varnothing \neq J\cap J(2\e)\subset I(\e)$.
     \end{enumerate}
\end{proposition}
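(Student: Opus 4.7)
The plan is to reduce the proposition to two applications of Lem.~\ref{lem:geometric condition for left interleaving}, using the observation that an $\e$-interleaving of $(\Cfunc(I),\Cfunc(J))$ is precisely a pair $(f,g)$ that simultaneously realizes a left $\e$-interleaving of $(\Cfunc(I),\Cfunc(J))$ and a left $\e$-interleaving of the swapped pair $(\Cfunc(J),\Cfunc(I))$ (obtained by interchanging the roles of $f$ and $g$). The hypothesis that $I\cap J$ is an interval is symmetric in $I$ and $J$, so Lem.~\ref{lem:geometric condition for left interleaving} is applicable in both orderings.

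For the forward direction, I would begin by assuming $(\Cfunc(I),\Cfunc(J))$ are $\e$-interleaved via a pair $(f,g)$. Since $I$ is $2\e$-significant, the left-triangle identity $g(\e)\circ f=\transition{\Cfunc(I)}{2\e}$ together with Lem.~\ref{lem:geometric condition for left interleaving} immediately yields conditions 1, 2, and 3. The right-triangle identity $f(\e)\circ g=\transition{\Cfunc(J)}{2\e}$ is a left-triangle identity for the swapped pair, and since $J$ is $2\e$-significant, a second application of Lem.~\ref{lem:geometric condition for left interleaving} returns the analogs of those three conditions with $I$ and $J$ interchanged. The first two analogs coincide with conditions 1 and 2 of the proposition, while the third becomes condition 4.

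For the converse, I would start from conditions 1 and 2 to produce, via Cor.~\ref{cor:nonzeromorphismiffvalid}, nonzero morphisms $f:\Cfunc(I)\to \Cfunc(J(\e))$ and $g:\Cfunc(J)\to \Cfunc(I(\e))$. By Prop.~\ref{prop:necessary condition}, each such morphism is determined on the single interval component of its support by one nonzero scalar, so I may normalize $f$ and $g$ to have scalar $1_\field$; by Lem.~\ref{lem:invariant properties} the shifted morphisms $f(\e)$ and $g(\e)$ also act as multiplication by $1_\field$ on their respective supports. Applying Prop.~\ref{prop:nonzero composition} to $g(\e)\circ f$ using condition 3 shows that this composition is the nonzero morphism $\Cfunc(I)\to \Cfunc(I(2\e))$ supported exactly on $I\cap I(2\e)$ with scalar $1_\field$, and by Prop.~\ref{prop:necessary condition} this coincides with $\transition{\Cfunc(I)}{2\e}$. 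A symmetric application of Prop.~\ref{prop:nonzero composition} using condition 4 gives $f(\e)\circ g=\transition{\Cfunc(J)}{2\e}$. Both triangles therefore commute, so $\Cfunc(I)$ and $\Cfunc(J)$ are $\e$-interleaved.

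The main obstacle, as I see it, is ensuring that a single pair $(f,g)$ simultaneously realizes both triangles in the converse direction, rather than producing two unrelated left $\e$-interleavings. This is resolved by the rigidity given by Prop.~\ref{prop:necessary condition}: morphisms between interval modules are scalar multiples of canonical maps on their interval-component supports, so a uniform normalization to scalar $1_\field$ forces the scalars of $g(\e)\circ f$ and $f(\e)\circ g$ to match those of $\transition{\Cfunc(I)}{2\e}$ and $\transition{\Cfunc(J)}{2\e}$ respectively. Once this rigidity is exploited, the remainder of the argument is bookkeeping, with Lem.~\ref{lem:geometric condition for left interleaving} and Prop.~\ref{prop:nonzero composition} doing the substantive work.
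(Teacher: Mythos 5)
Your proposal is correct and takes essentially the same approach as the paper: reduce the full interleaving to left interleavings of the two ordered pairs and invoke Lem.~\ref{lem:geometric condition for left interleaving} twice. In fact you are more explicit than the paper's own two-line proof (which simply asserts that $\e$-interleaving is equivalent to both ordered pairs being left $\e$-interleaved): you correctly flag that this equivalence requires a \emph{single} pair $(f,g)$ to realize both triangles simultaneously, and you supply the rigidity/normalization argument---that nonzero morphisms between interval modules with intersection-interval supports are scalar multiples of a canonical map, so fixing scalar $1_\field$ for $f$ and $g$ forces $g(\e)\circ f=\transition{\Cfunc(I)}{2\e}$ and $f(\e)\circ g=\transition{\Cfunc(J)}{2\e}$---which the paper leaves implicit.
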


\begin{proof}
Follows directly by \cref{lem:geometric condition for left interleaving} and observing that $\Cfunc(I),\Cfunc(J)$ are $\e$-interleaved if and only if the pairs $(\Cfunc(I),\Cfunc(J))$ and $(\Cfunc(J),\Cfunc(I))$ are left $\e$-interleaved.
\end{proof}

Assuming further significance on one of the intervals, interleavings of interval modules can be detected from just one-sided interleavings. The next proposition provides a crucial ingredient for the proof of our main theorem in the following section.

\begin{proposition}
\label{prop:convex-symmetry}
Let $\e\geq0$ and let $I,J\in \FF$.
If $I$ is $4\e$-significant and $\Cfunc(I),\Cfunc(J)$ are left $\e$-interleaved, then  $\Cfunc(I),\Cfunc(J)$ are $\e$-interleaved. 
\end{proposition}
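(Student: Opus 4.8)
The plan is to reduce the claim to a purely combinatorial statement about interval intersections via the geometric characterization of interleavings from Prop.~\ref{prop:geometric condition for interleaving}. We are given that $I$ is $4\e$-significant and that $(\Cfunc(I),\Cfunc(J))$ are left $\e$-interleaved, and we must show $(\Cfunc(J),\Cfunc(I))$ are left $\e$-interleaved as well, since the conjunction of the two one-sided interleavings gives the full $\e$-interleaving. By Lem.~\ref{lem:geometric condition for left interleaving}, the left $\e$-interleaving of $(\Cfunc(I),\Cfunc(J))$ tells us that either $I$ is $2\e$-trivial---which is impossible here, since $4\e$-significance of $I$ forces $2\e$-significance (the $4\e$-transition morphism factors through the $2\e$-transition morphism, so if the latter were zero so would be the former)---or conditions $1$--$3$ of that lemma hold:
\begin{enumerate}
    \item $I\cap J(\e)$ is $(I,J(\e))$-valid,
    \item $J\cap I(\e)$ is $(J,I(\e))$-valid,
    \item $\varnothing \neq I\cap I(2\e)\subset J(\e)$.
\end{enumerate}
To apply Lem.~\ref{lem:geometric condition for left interleaving} in the other direction, we need either $J$ to be $2\e$-trivial (in which case zero maps give the left $\e$-interleaving of $(\Cfunc(J),\Cfunc(I))$ trivially, and we are done), or we need the three symmetric conditions: (1$'$) $J\cap I(\e)$ is $(J,I(\e))$-valid, (2$'$) $I\cap J(\e)$ is $(I,J(\e))$-valid, and (3$'$) $\varnothing \neq J\cap J(2\e)\subset I(\e)$. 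Conditions (1$'$) and (2$'$) are exactly conditions 2 and 1 above, so they come for free. The entire content of the proof is therefore to establish (3$'$): $\varnothing \neq J \cap J(2\e) \subset I(\e)$, under the assumption that $J$ is $2\e$-significant (i.e. $J \cap J(2\e) \neq \varnothing$, handling the trivial case separately).

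The key step is deriving (3$'$) from conditions 1--3 together with the $4\e$-significance of $I$. First I would record the inclusion chains coming from conditions 1 and 2. Condition 1, that $I \cap J(\e)$ is $(I,J(\e))$-valid, says in particular that for $p \in I \cap J(\e)$, any $q \in I$ with $q \le p$ lies in $J(\e)$, and any $r \in J(\e)$ with $r \ge p$ lies in $I$; condition 2 gives the analogous statements for $J \cap I(\e)$. Next, the $4\e$-significance of $I$ gives a nonzero element witnessing $\varnothing \neq I \cap I(4\e)$, and applying the $\Omega$-shift to condition 3 (using that $\FF$ and the interval structure are $\Omega$-stable, Prop.~\ref{prop:shift invariance}) yields $I(\e) \cap I(3\e) \subset J(2\e)$, and similarly shifted copies of the validity conditions. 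I would then take a point $p \in J \cap J(2\e)$ (nonempty by the $2\e$-significance assumption on $J$) and chase it: using condition 3 applied with suitable shifts, together with the fact that $p$ and $\Omega_{2\e}(p)$ both witness membership in shifted copies of $J$, I want to land $p$ inside $I(\e)$. Concretely, I expect to use that $p \le \Omega_{2\e}(p) \le \Omega_{4\e}(p)$, feed the outer pair through the shifted version of the containment $I \cap I(2\e) \subset J(\e)$ and the validity conditions to move between $I$-shifts and $J$-shifts, and use $4\e$-significance of $I$ to guarantee that some $I$-shift is actually nonempty along the way so the chase does not collapse to a vacuous statement.

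The main obstacle I anticipate is precisely orchestrating this point-chase: one must carefully track which shift of $I$ and which shift of $J$ each intermediate point belongs to, and invoke exactly the right validity condition (in its correctly shifted form) at each step, making sure the comparabilities $q \le p$ or $r \ge p$ required by the definition of $(I,J)$-validity are genuinely available from the flow relations $\Omega_s(p) \le \Omega_t(p)$ for $s \le t$. The $4\e$-significance (rather than mere $2\e$-significance) of $I$ is what should make the chase close up---it provides one extra ``rung'' of nonemptiness, $I \cap I(4\e) \neq \varnothing$, that prevents the argument from degenerating---so the delicate part is seeing exactly where that extra hypothesis is consumed. Once (3$'$) is in hand, Lem.~\ref{lem:geometric condition for left interleaving} delivers the left $\e$-interleaving of $(\Cfunc(J),\Cfunc(I))$, and combining with the given left $\e$-interleaving of $(\Cfunc(I),\Cfunc(J))$ gives the full $\e$-interleaving via Prop.~\ref{prop:geometric condition for interleaving}.
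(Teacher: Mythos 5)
Your high-level plan is right: reduce to the one-sided interleaving condition $(3')$, $\varnothing \neq J \cap J(2\e) \subset I(\e)$, observe that $(1')$ and $(2')$ are free, and finish with Lem.~\ref{lem:geometric condition for left interleaving} and Prop.~\ref{prop:geometric condition for interleaving}. But the method you propose for establishing $(3')$—a direct point-chase—is substantially harder than you suggest, and the sketch as written has a genuine gap.

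The difficulty is this. Pick an arbitrary $p \in J \cap J(2\e)$; you want $p \in I(\e)$. To invoke the validity condition $2$ (resp.~a shift of condition $1$), you would need some $q \in J \cap I(\e)$ with $p \le q$ (resp.~some $q \in I(\e) \cap J(2\e)$ with $q \le p$). But none of $\Omega_{2\e}(p)$, $\Omega_{4\e}(p)$ (your "outer pair"), nor the containment $I\cap I(2\e)\subset J(\e)$ (which points from $I$-shifts into $J$-shifts, the wrong direction) produces such a comparable witness, because the $4\e$-significance gives you a point $s \in I\cap I(4\e)$ that is a priori unrelated to $p$ in the partial order. To actually close the chase you have to (i) use poset-convexity to show $s$ lies simultaneously in $I\cap I(\e)\cap\cdots\cap I(4\e)$ and hence, via condition 3 and its shifts, in $J(\e)\cap I(2\e)\cap J(3\e)$; and then (ii) propagate from this single witness $s$ to the arbitrary $p' \in J(\e)\cap J(3\e)$ by taking a zig-zag path inside the poset-connected interval $J(\e)\cap J(3\e)$ (here intersection-closedness of $\FF$ is essential) and applying the appropriately shifted validity conditions at each step of the path. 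Step (ii) is exactly the content of Prop.~\ref{prop:necessary condition} / Cor.~\ref{cor:nonzeromorphismiffvalid} — you would essentially be re-deriving it by hand, and your sketch does not mention poset-connectedness or path propagation at all.

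The paper sidesteps the entire chase by staying algebraic: from $g(\e)\circ f=\varphi^{2\e}_{\Cfunc(I)}$ one gets $g(3\e)\circ f(2\e)\circ g(\e)\circ f=\varphi^{4\e}_{\Cfunc(I)}\neq\vZero$ (here is where $4\e$-significance is consumed), hence the middle composition $f(2\e)\circ g(\e):\Cfunc(J(\e))\to\Cfunc(J(3\e))$ is nonzero, and Prop.~\ref{prop:nonzero composition} converts this single fact into $\varnothing\neq J(\e)\cap J(3\e)\subset I(2\e)$, which is $(3')$ after shifting by $-\e$. All the bookkeeping you were worried about orchestrating is packaged once and for all in Props.~\ref{prop:necessary condition} and~\ref{prop:nonzero composition}. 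If you want to salvage the geometric route, you must add the witness-construction and the path-propagation argument explicitly; as it stands, the proposal names the right target but does not actually reach it.
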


\begin{proof}
Assume that $I$ is $4\e$-significant and $\Cfunc(I),\Cfunc(J)$ are left $\e$-interleaved via a left interleaving $(f,g)$. If $J$ is $2\e$-trivial, then trivially, $\Cfunc(I),\Cfunc(J)$ are right $\e$-interleaved, and we are done. Otherwise, $J$ is $2\e$-significant. By the left $\e$-interleaving we obtain  $g(\e)\circ f=\varphi_{\Cfunc(I)}^{2\e}$, hence
\[
g(3\e)\circ f(2\e)\circ g(\e)\circ f=\varphi_{\Cfunc(I)}^{2\e}(2\e)\circ\varphi_{\Cfunc(I)}^{2\e}=\varphi_{\Cfunc(I)}^{4\e} \neq \vZero,
\]
since $I$ is $4\e$-significant.
It follows that the middle composition of the left-hand side, namely
$f(2\e) \circ g(\e)$, is nonzero.
This implies that $\varnothing\neq J(\e)\cap J(3\e)\subset I(2\e)$, by \cref{prop:nonzero composition}. 
Thus, after shifting, we have $\varnothing\neq J\cap J(2\e)\subset I(\e)$.
Since $(\Cfunc(I),\Cfunc(J))$ are left $\e$-interleaved and $I$ is not $2\e$-trivial, by \cref{lem:geometric condition for left interleaving}, we have that $\varnothing\neq I\cap I(2\e)\subset J(\e)$, and $f$ and $g$ being nonzero morphisms.
Thus, $I,J$ are $2\e$-significant and the four conditions in \cref{prop:geometric condition for interleaving} are satisfied, and hence $\Cfunc(I)$ and $\Cfunc(J)$ are $\e$-interleaved.
\end{proof}

\section{Hausdorff stability of barcodes over posets}
\label{sec:Hausdorff Stability of Barcodes}

This section contains our main result, that Hausdorff distance is stable with respect to interleaving distance, together with some surrounding discussion. We begin with an example showing that the result is nontrivial.

\subsection{Instability of bottleneck and Hausdorff distances}

As shown by Botnan and Lesnick~\cite{botnan2018algebraic}, the bottleneck distance
can be arbitrarily larger than the interleaving distance; that is, the ratio
\( d_{\mathrm{B}}(M,N)/ d_{\mathrm{I}}(M,N) \) can become arbitrarily large.
This means that the barcodes are not interleaving-stable invariants of
interval–decomposable modules with respect to the bottleneck distance
\( d_{\mathrm{B}} \). In the following example, we revisit the same example used by Botnan and Lesnick
to show that the barcodes of interval–decomposable modules are also not stable
with respect to the Hausdorff distance \( d_{\mathrm{H}} \).

\begin{example}
    Consider the modules $M = C(I) \oplus C(J)$ and $N=C(K)$, shown in Figure \ref{fig:lesnick_example}(a), including a variable parameter $a \geq 1$. It was shown in \cite{botnan2018algebraic} that $d_{\mathrm I}(M,N) \leq 1$. We claim that $d_{\mathrm H}(M,N) \geq a/2$, so that taking $a \to \infty$ establishes instability of $d_{\mathrm H}$. 

    For $\e < a/2$, we claim that there exists no $\e$-correspondence $\sigma \subset B(M) \times B(N)$. Since $I$ and $J$ are $a$-significant, neither $C(I)$ nor $C(J)$ is $a/2$-interleaved with the zero module. Hence any potential $\e$-correspondence must contain, say, $(I,K)$. But $C(I)$ and $C(K)$ are not $\e$-interleaved. To see this, consider $K(\e) \cap K(-\e)$, as shown in Figure \ref{fig:lesnick_example}(b). We see that $K(\e) \cap K(-\e)$ is not contained in $I$, or equivalently, $K \cap K(2\e) \not \subset I(\e)$. This violates the interleaving condition in Prop.~\ref{prop:geometric condition for interleaving}, so the claim is proved.
\end{example}
The previous example naturally raises the question of whether the Hausdorff distance becomes
stable with respect to the interleaving distance when we restrict attention to
families of intervals that are closed under pairwise intersections.
We address this question in the next section.

 \begin{figure}[h!]
    \centering
     \begin{subfigure}{.32\textwidth}
      \begin{overpic}[unit=1mm,scale=.25]{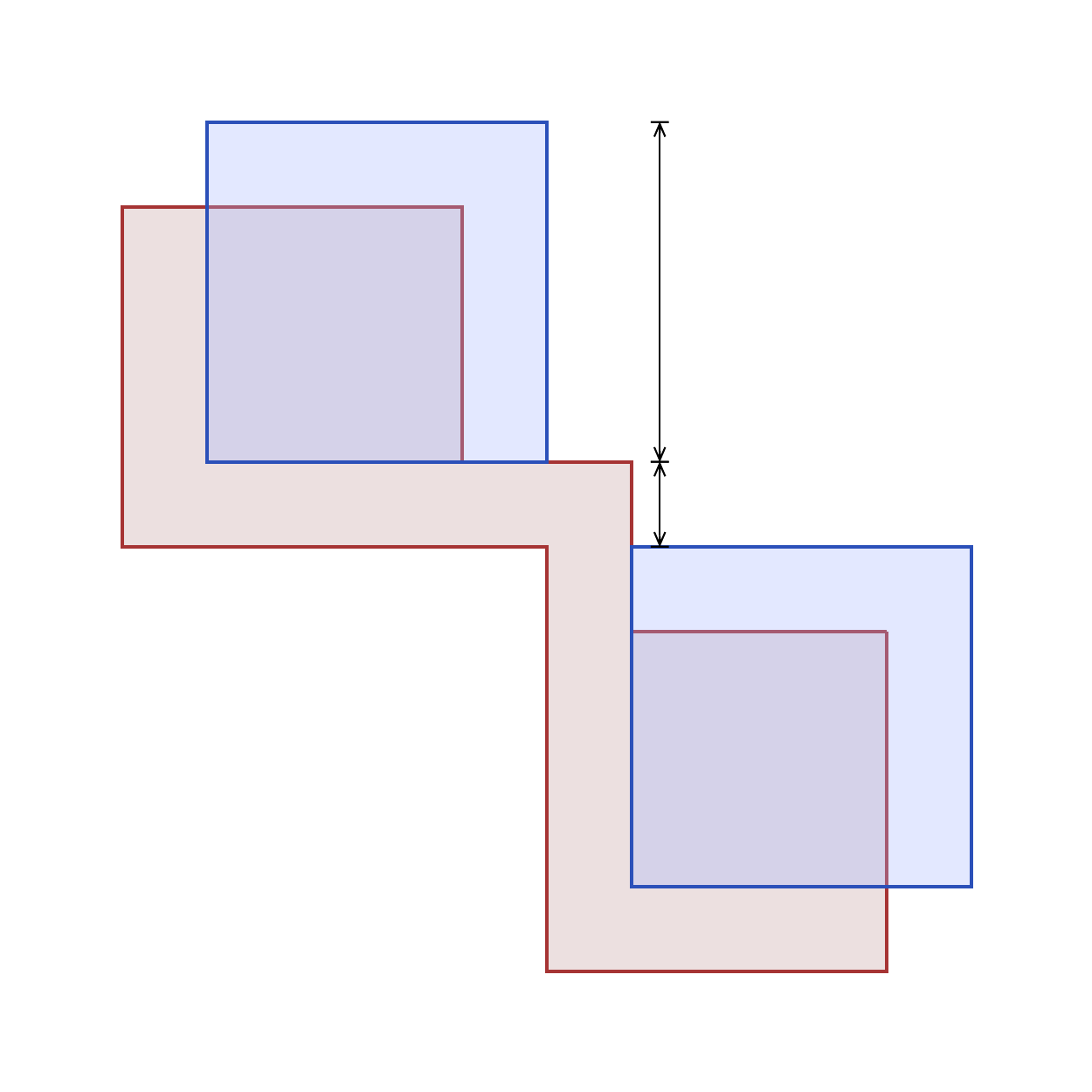}
      \put(46,83){\small $I$}
      \put(85,44){\small $J$}
      \put(52.5,52){\small $K$}
      \put(62,52){\small $1$}
      \put(62,72){\small $a$}
      \end{overpic}
      \caption{}
      \label{fig:lesnick_1}
    \end{subfigure}%
         \begin{subfigure}{.32\textwidth}
      \begin{overpic}[unit=1mm,scale=.25]{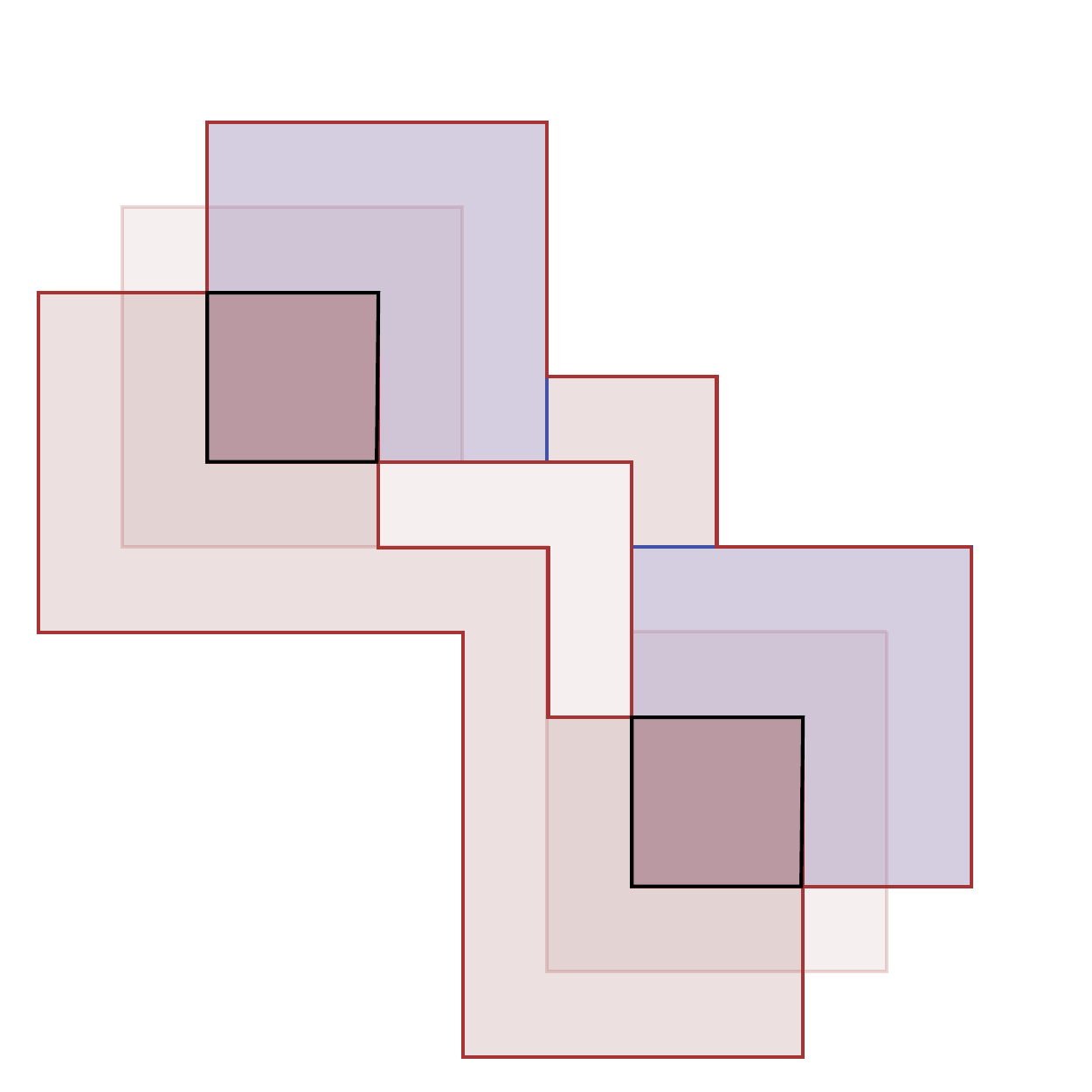}
      \end{overpic}
      \caption{}
      \label{fig:lesnick_2}
    \end{subfigure}%
      \begin{subfigure}{.32\textwidth}
      \begin{overpic}[unit=1mm,scale=.25]{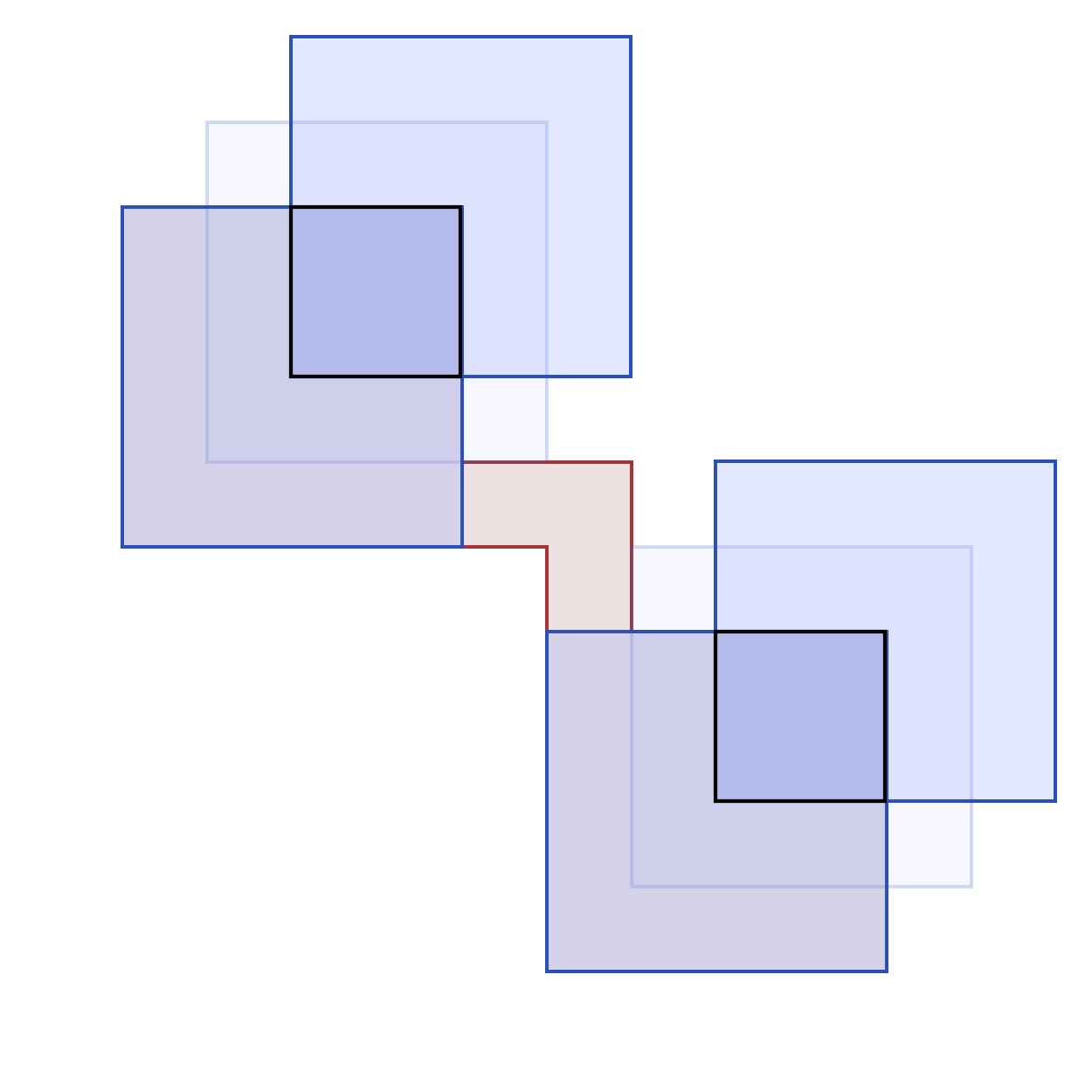}
      \end{overpic}
      \caption{}
      \label{fig:lesnick_3}
    \end{subfigure}%
    \caption{Intervals illustrating Hausdorff instability; see text for details. \textbf{(a)} Original intervals, generating persistence modules $\Cfunc(I) \oplus \Cfunc(J)$ and $\Cfunc(K)$.  \textbf{(b)} Interval $K$ shifted by $\pm \varepsilon$. The unshifted $K$ is shown with reduced opacity and the intersection $K(\varepsilon) \cap K(-\varepsilon)$ is shown with increased opacity. \textbf{(c)} Intervals $I$ and $J$ each shifted by $\pm \e$. The unshifted $I$ and $J$ are shown with reduced opacity and the intersection of the shifted versions is shown with increased opacity.}
     \label{fig:lesnick_example}
 \end{figure}
 
\subsection{Hausdorff stability of barcodes with intersection-closed intervals}

We now prove our main result, which we recall below.

\begin{theorem}
\label{thm:stability}
Let $(\PP,\Omega)$ be a poset equipped with a $\R$-flow and let $\FF$ be a family of intervals in $\PP$ that contains the empty interval, is closed under intersections, and is closed under the action of $\Omega$. 
Let $\Mfunc,\Nfunc:\PP\to\Vect_K$ be two interval-decomposable $\PP$-modules with $\barc(\Mfunc),\barc(\Nfunc)\subset \FF$. 
Then
\[d_\mathrm{H}(\Mfunc,\Nfunc)\leq 2\cdot d_\mathrm{I}(\Mfunc,\Nfunc).\]
\end{theorem}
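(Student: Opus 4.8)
The plan is to reduce the statement to a statement about individual interval modules in the two barcodes and then build an $\e$-correspondence, where $\e = 2\delta$ for $\delta$ slightly larger than $d_{\mathrm I}(\Mfunc,\Nfunc)$. Suppose $\Mfunc,\Nfunc$ are $\delta$-interleaved via morphisms $f\colon \Mfunc\to\Nfunc(\delta)$ and $g\colon \Nfunc\to\Mfunc(\delta)$. Using the decomposition of morphisms of interval-decomposable modules from \eqref{eqn:morphism_decomposition} and the composition formula \eqref{eqn:composition morphism}, I would fix $I\in\barc(\Mfunc)$ and examine the coefficient of $\varphi_{\Cfunc(I)}^{2\delta}$ appearing in $\transition{\Mfunc}{2\delta}=g(\delta)\circ f$, namely $\sum_{J\in\barc(\Nfunc)} g_{J,I}(\delta)\circ f_{I,J}$. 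If $I$ is $4\delta$-trivial (hence also $2\e$-trivial, since $\e=2\delta$), then $\Cfunc(I)$ is $\e$-interleaved with the zero module and $I$ may be left unmatched. If $I$ is $4\delta$-significant, then the $(2\delta)$-transition morphism $\varphi_{\Cfunc(I)}^{2\delta}$ is nonzero, so at least one summand $g_{J,I}(\delta)\circ f_{I,J}$ is nonzero; in particular $f_{I,J}\neq 0$ and $g_{J,I}\neq 0$ for that $J$, and this is exactly the data of a left $\delta$-interleaving of the pair $(\Cfunc(I),\Cfunc(J))$ (the left interleaving triangle is precisely $g_{J,I}(\delta)\circ f_{I,J} = \varphi_{\Cfunc(I)}^{2\delta}$, up to rescaling the nonzero scalars to $1_{\K}$). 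By \cref{prop:convex-symmetry}, since $I$ is $4\delta$-significant and $\FF$ is intersection-closed and $\Omega$-closed (so $I,J\in\FF$ with $I\cap J$ an interval, and all relevant shifted intersections are intervals), the pair $(\Cfunc(I),\Cfunc(J))$ is in fact $\delta$-interleaved, hence certainly $\e$-interleaved with $\e = 2\delta$.

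The next step is to assemble these pairings into a genuine $\e$-correspondence $\sigma\subset\barc(\Mfunc)\times\barc(\Nfunc)$ as required by \cref{def:bottleneck}. For each $4\delta$-significant $I\in\barc(\Mfunc)$, choose one witnessing $J = J(I)\in\barc(\Nfunc)$ as above and put $(I,J(I))\in\sigma$; symmetrically, for each $4\delta$-significant $J\in\barc(\Nfunc)$, using the other interleaving triangle $\transition{\Nfunc}{2\delta} = f(\delta)\circ g$, choose a witnessing $I = I(J)\in\barc(\Mfunc)$ and put $(I(J),J)\in\sigma$. Every $4\delta$-trivial interval in either barcode is $\e$-interleaved with the zero module and so is permitted to be unmatched. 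One must check the remaining requirement: that every $I\in\barc(\Mfunc)$ not appearing in any pair of $\sigma$ is $\e$-interleaved with the zero module. But if $I$ appears in no pair, then in particular $I$ is not $4\delta$-significant (else it was explicitly paired in the first family), hence $I$ is $4\delta$-trivial, hence $2\e$-trivial, hence $\e$-interleaved with $\vZero$; symmetrically for $N$. Since a $\delta$-interleaving exists for every $\delta > d_{\mathrm I}(\Mfunc,\Nfunc)$, we obtain a $2\delta$-correspondence for every such $\delta$, and taking the infimum gives $d_{\mathrm H}(\Mfunc,\Nfunc)\le 2\,d_{\mathrm I}(\Mfunc,\Nfunc)$.

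A technical point worth addressing carefully is \emph{multiplicity}: barcodes are multisets, so the $\e$-correspondence conditions are required to hold with multiplicity, and one must be sure the selection procedure respects this. I would handle this by working with an explicit multiset representation $\barc(\Mfunc) = \{(I_a,a)\mid a\in A\}$ and $\barc(\Nfunc) = \{(J_b,b)\mid b\in B\}$ and defining $\sigma$ as a sub-multiset of $A\times B$: for each $4\delta$-significant index $a$ pick an index $b$ with $f_{I_a,J_b}\neq 0\neq g_{J_b,I_a}$ and add $(a,b)$; since $\sigma$ is being built as a correspondence rather than a matching, multiple indices $a$ may legitimately point to the same $b$ and vice versa, so no injectivity bookkeeping is needed — this is exactly where the relaxation from bottleneck to Hausdorff distance buys us room. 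The unmatched-index condition is then the multiset statement proved above.

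\textbf{Main obstacle.} The crux of the argument is the passage from a one-sided (left) interleaving of $(\Cfunc(I),\Cfunc(J))$ to a genuine two-sided $\delta$-interleaving; this is exactly the content of \cref{prop:convex-symmetry}, whose hypothesis ``$I$ is $4\delta$-significant'' is precisely why the factor $2$ (rather than something smaller) appears in the bound — we need $4\delta$-significance of $I$ to run the four-fold composition argument, and $4\delta = 2\e$, matching the $\e$-triviality threshold used for the unmatched intervals. Everything else — the decomposition of morphisms, the composition formula, and the extraction of nonzero scalar components — is bookkeeping built on \eqref{eqn:morphism_decomposition}, \eqref{eqn:composition morphism}, and \cref{prop:necessary condition}. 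The one place demanding care is confirming that whenever we invoke \cref{prop:convex-symmetry} or \cref{prop:nonzero composition}, the relevant pairwise intersections ($I\cap J$, $I\cap J(\delta)$, $I\cap I(2\delta)$, etc.) are indeed intervals; this is guaranteed by the hypotheses that $\FF$ is closed under intersections and under the action of $\Omega$, together with \cref{prop:shift invariance}.
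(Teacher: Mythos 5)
Your proposal is correct and follows essentially the same route as the paper's proof: decompose the interleaving into components via \eqref{eqn:morphism_decomposition} and \eqref{eqn:composition morphism}, split on $4\delta$-triviality, extract a nonzero summand in the transition equation to obtain a left interleaving, and then invoke \cref{prop:convex-symmetry} to upgrade it to a full interleaving. The one small stylistic difference is that where you argue the nonzero summand equals $\varphi_{\Cfunc(I)}^{2\delta}$ up to rescaling (using one-dimensionality of the relevant Hom space), the paper instead routes through \cref{prop:nonzero composition} and \cref{lem:geometric condition for left interleaving} to exhibit the geometric validity conditions directly; both arguments rest on the same underlying machinery and your handling of the multiplicity bookkeeping is sound.
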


\begin{proof}
Let $f:\Mfunc\to\Nfunc(\e)$ and $g:\Nfunc\to\Mfunc(\e)$ be an $\e$-interleaving of $M,N$.
By writing $f,g,\varphi^{2\e}_\Mfunc, \varphi^{2\e}_{\Nfunc}$ in a matrix form (see \cref{eqn:morphism_decomposition,eq:transition matrix form}) and by \cref{lem:invariant properties}, we obtain that the diagrams
\begin{equation*}
	\begin{tikzcd}
	\underset{I\in  \barc(\Mfunc)}{\bigoplus}\Cfunc(I)
	\arrow[dr,  "{\bigoplus\delta_{I,I'}\cdot\transition{\Cfunc(I)}{2\e}}"']
	\arrow[r,   swap, "{\bigoplus f_{I,J(\e)}}"']
	\&
	\underset{J\in  \barc(\Nfunc)}{\bigoplus}\Cfunc(J(\e))
	\arrow[d,  crossing over, swap, "{\bigoplus (g(\e))_{J(\e),I'(2\e)}}"']
	\&
	\&
	\underset{J\in  \barc(\Nfunc)}{\bigoplus}\Cfunc(J)
	\arrow[r,  swap, "{\bigoplus g_{J,I(\e)}}"']
	\arrow[dr,    "{\bigoplus\delta_{J,J'}\cdot\transition{\Cfunc(J)}{2\e}}"']
	\&
	\underset{I\in  \barc(\Mfunc)}{\bigoplus}\Cfunc(I(\e))
	\arrow[d,   "{\bigoplus (f(\e))_{I(\e),J'(2\e)}}"]
	\\
	\&
	\underset{I\in  \barc(\Mfunc)}{\bigoplus}\Cfunc(I(2\e))
	\&
	\&
	\&
	\underset{J\in  \barc(\Nfunc)}{\bigoplus}C(J(2\e))
	\end{tikzcd}
	\end{equation*}
	commute. 
    By \cref{eqn:composition morphism}, the left diagram yields:
\begin{equation}
    \label{eqn:I}
\delta_{I,I'}\cdot\transition{\Cfunc(I)}{2\e}=\sum_{J\in  \barc(\Nfunc)}(g(\e))_{J(\e),I'(2\e)}\circ f_{I,J(\e)},
\end{equation}
for any pair $(I,I')\in\barc(M)\times \barc(M)$.

To prove the theorem, it suffices to show that for every interval 
\( I \in \barc(\Mfunc)\coprod\{\varnothing\}\) there exists an interval 
\( J \in \barc(\Nfunc)\coprod\{\varnothing\}\) such that 
\(\Cfunc(I)\) and \(\Cfunc(J)\) are \(2\e\)-interleaved.  
The reverse direction, namely that for every \(J\) there is such an \(I\), is shown analogously.

Let \( I \in \barc(\Mfunc) \). First observe that if $I$ is $4\e$-trivial (i.e., \( \transition{\Cfunc(I)}{4\e} = \mathbf{0} \)), then \(\Cfunc(I)\) is \(2\e\)-interleaved with $\Cfunc(\emptyset)$. We therefore focus on the case where \( \transition{\Cfunc(I)}{4\e} \neq \mathbf{0} \).  
Then, in particular, \( \transition{\Cfunc(I)}{2\e} \neq \mathbf{0} \).
Applying \cref{eqn:I} with \( I' = I \), we obtain an interval 
\( J \in \barc(\Nfunc) \) such that $(g(\e))_{J(\e),\,I(2\e)} \circ f_{I,\,J(\e)} \neq \mathbf{0}$.
By \cref{prop:nonzero composition}, this implies 
\[
\emptyset \neq I \cap I(2\e) \subset J(\e).
\]
Since the composition is nonzero, both morphisms 
\( f_{I,\,J(\e)} \) and \( (g(\e))_{J(\e),\,I(2\e)} \) 
are themselves nonzero.  Hence, by \cref{lem:geometric condition for left interleaving},
\(\Cfunc(I)\) is left \(\e\)-interleaved with \(\Cfunc(J)\).
Since \(I\) is \(4\e\)-significant, \cref{prop:convex-symmetry}
upgrades this to a full \(\e\)-interleaving of 
\(\Cfunc(I)\) and \(\Cfunc(J)\), and therefore, in particular, they are 
\(2\e\)-interleaved.
\end{proof}
\begin{remark}
    Thm.~\ref{thm:stability} can be applied to certain types of interval-decomposable $\R^n$-modules: 
    \begin{enumerate}
        \item As we have seen in Cor.~\ref{Rn_conv_d}, the set of all convex intervals of $\R^n$ together with the empty interval form an intersection-closed family of intervals of $\R^n$. If $M$ and $N$ are convex interval-decomposable persistence modules over $\R^n$, then $d_{\mathrm H}(M,N)\leq 2\cdot d_{\mathrm I}(M,N)$. This is closely related to the conjecture proposed by Botnan and Lesnick~\cite{botnan2018algebraic}, according to which $d_{\mathrm B}(M,N)\leq c_{n}\cdot d_{\mathrm I}(M,N)$, with the Lipschitz constant potential depending on dimension $n$.
        \item Also, the set of all uppersets of $\R^n$ together with the empty interval form an intersection-closed family of intervals of $\R^n$.  If $M$ and $N$ are upperset interval-decomposable persistence modules over $\R^n$, then $d_{\mathrm H}(M,N)\leq 2\cdot d_{\mathrm I}(M,N)$. This is closely related to the conjecture proposed by Bjerkevik~\cite{bjerkevik2025stabilizing}, according to which $d_{\mathrm B}(M,N)\leq c_{n}\cdot d_{\mathrm I}(M,N)$. 
    \end{enumerate}  
\end{remark}
\subsection{Tightness of the bound}\label{sec:tightness}

We now construct, for any $\delta \in (0, 2)$, convex interval-decomposable modules $\Mfunc$ and $\Nfunc$ such that $d_\mathrm{H}(\Mfunc,\Nfunc) \leq (2-\delta) d_\mathrm{I}(\Mfunc,\Nfunc)$, thereby illustrating the tightness of the bound in \cref{thm:stability}. 
The construction is based on three intervals $I$, $J$, and $K$, from which we
define
\[
\Mfunc \coloneqq \Cfunc(I) \oplus \Cfunc(J),
\qquad
\Nfunc \coloneqq \Cfunc(K).
\]
We choose $I$, $J$, and $K$ so that $d_\mathrm{I}(\Mfunc,\Nfunc)=1$ while
$d_\mathrm{H}(\Mfunc,\Nfunc)=2-\delta$.  
In the visualization in \cref{fig:counterexample_3}, the intervals $I$ and $J$
appear in blue and $K$ appears in red.  
We now describe the construction.\\

Begin with a square interval $I \subset \R^2$ of side length $4-2\delta$ (so that the square is $(2-\delta)$-significant). 
Label the four corners of $I$ as follows:
\begin{center}
    $\ell_-$ (lower left), \quad 
    $\ell_+$ (upper left), \quad
    $r_-$ (lower right), \quad
    $r_+$ (upper right).
\end{center}
\cref{fig:counterexample_1} shows the interval $I$ together with its diagonal translates $I(1)$ and $I(-1)$, obtained by shifting $I$ in the $(-1,-1)$ and $(1,1)$ directions, respectively.

We construct a second interval $K$ as a 6-sided polygon such that $ I(1)\cap I(-1) \subset K$, $K$ is $2$-significant.
For any point $p\in\R^2$ and $\e\in \R$, we write $p(\e)=p-\e(1,1)$.
Three vertices of $K$, corresponding to the three red dots 
in \cref{fig:counterexample_1}, are fixed as follows:
\[
\ell_-(-1), \qquad r_+(1), \qquad 
q := \text{the upper-left corner of } I(1)\cap I(-1).
\]
To determine the remaining three vertices, we first draw the two solid rays shown in
\cref{fig:counterexample_1}:
\begin{itemize}
    \item the ray starting at $\ell_-(-1)$ and passing through $r_-(1)$,
    \item the ray starting at $r_+(1)$ and passing through $r_-(-1)$.
\end{itemize}
Since $\delta>0$, each solid ray forms a positive angle with the corresponding anti-diagonal of $I(1)$ and $I(-1)$ (shown as dotted lines in \cref{fig:counterexample_1}), and therefore the two rays diverge.  We may therefore place two vertices of $K$ far out along these rays (taken at equal
distances from their initial points).  The sixth vertex of $K$ is then placed on the line joining $q$ and $r_-$ so that the two edges incident to this vertex meet at a right angle; see \cref{fig:counterexample_2}.  
Placing the ray vertices sufficiently far away ensures that these two edges have length at least $4$, making $K$ a $2$-significant interval.

We construct one final interval $J$ as $J \coloneqq K(1)\cap K(-1)$. By construction, $I$ and $J$ are disjoint---see \cref{fig:counterexample_3} for an example where $\delta$ is relatively large and \cref{fig:counterexample_4} for an example where $\delta$ is small. 

Recall that $\Mfunc = \Cfunc(I) \oplus \Cfunc(J)$ and $\Nfunc = \Cfunc(K)$. 
The above construction ensures:
\begin{itemize}
    \item Using Prop.~\ref{prop:equivalent condition}, one can show that $d_{\mathrm I}(M,N) \geq 1$. On the other hand, there exists a $1$-interleaving of $M$ and $N$, so that $d_{\mathrm I}(M,N) = 1$.
    \item Prop.~\ref{prop:geometric condition for interleaving} implies that $d_\mathrm{I}(\Cfunc(I),\Cfunc(K)) = 2-\delta$ and $d_\mathrm{I}(\Cfunc(J),\Cfunc(K)) = 1$.
    \item Using the previous calculations, we deduce that $d_\mathrm{H}(M,N) = 2-\delta$ when $\delta \in (0,1]$. 
\end{itemize}
This proves that the bound in \cref{thm:stability} is tight. Observe that when $\delta = 0$, the rays used in the construction of $K$ are parallel, so that the construction of $K$ is no longer valid; i.e., this example cannot be extended past the theoretical Lipschitz bound of $2$. 

 \begin{figure}[h!]
    \centering
     \begin{subfigure}{.24\textwidth}
      \centering
      \begin{overpic}[abs,unit=1mm,scale=.2]{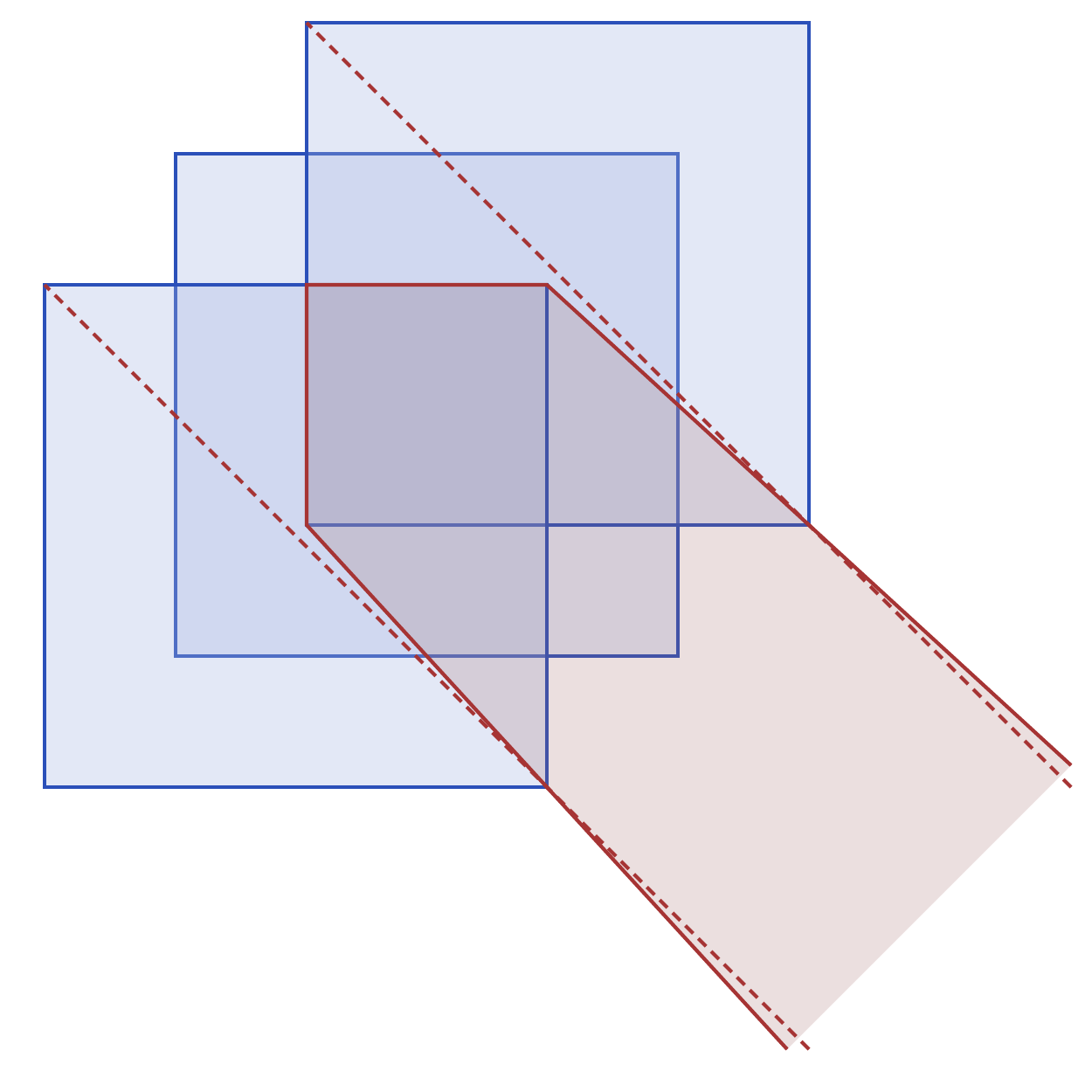}
      \put(8,32.5){\small $I$}
      \put(3,13){\small $I(-1)$}
      \put(24,38){\small $I(1)$}
      \put(32,8){\small $K$}
      \put(11.25,30.75){\tiny \color{red!10!purple} $\bullet$}
      \put(20.45,30.75){\tiny \color{red!10!purple} $\bullet$}
      \put(11.25,21.55){\tiny \color{red!10!purple} $\bullet$}
      \end{overpic}
      \caption{}
      \label{fig:counterexample_1}
    \end{subfigure}%
    \begin{subfigure}{.24\textwidth}
      \centering
      \begin{overpic}[abs,unit=1mm,scale=.2]{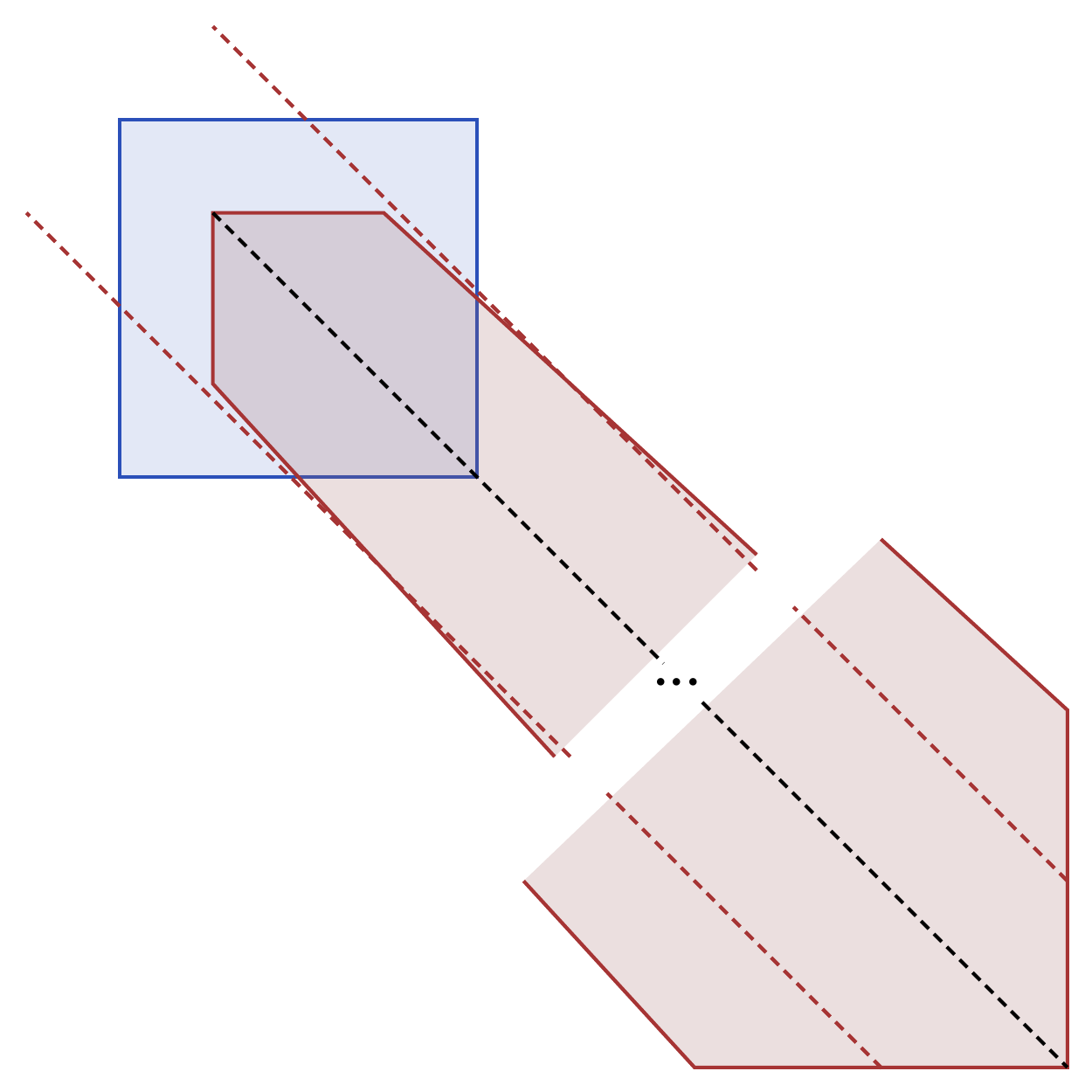}
      \put(6,34){\small $I$}
      \put(32,5){\small $K$}
      \end{overpic}
      \caption{}
      \label{fig:counterexample_2}
    \end{subfigure}%
    \begin{subfigure}{.24\textwidth}
      \centering
      \begin{overpic}[abs,unit=1mm,scale=.2]{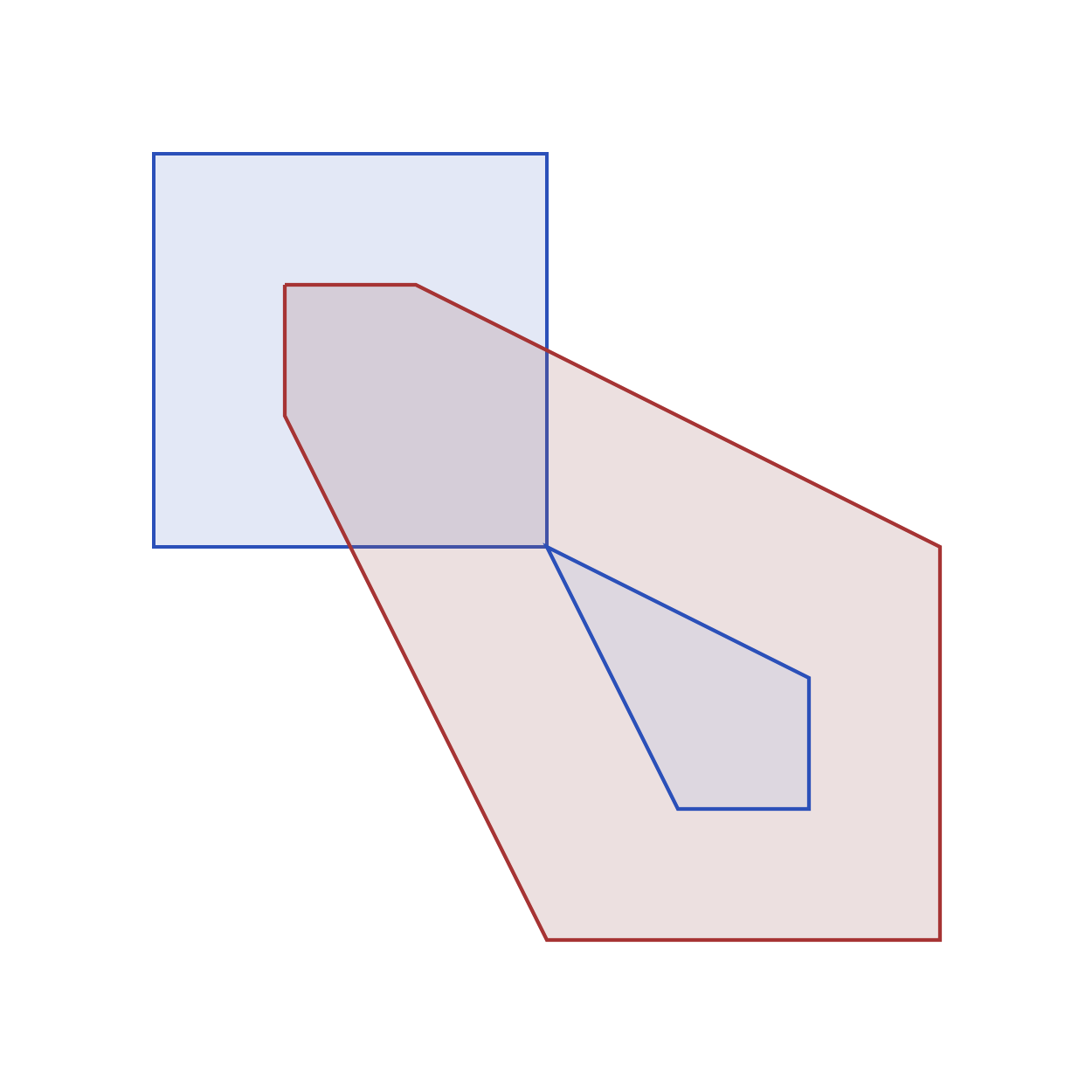}
      \put(8,32.5){\small $I$}
      \put(28,12){\small $J$}
      \put(32,8){\small $K$}
      \end{overpic}
      \caption{}
      \label{fig:counterexample_3}
    \end{subfigure}%
    \begin{subfigure}{.24\textwidth}
      \centering
      \begin{overpic}[abs,unit=1mm,scale=.2]{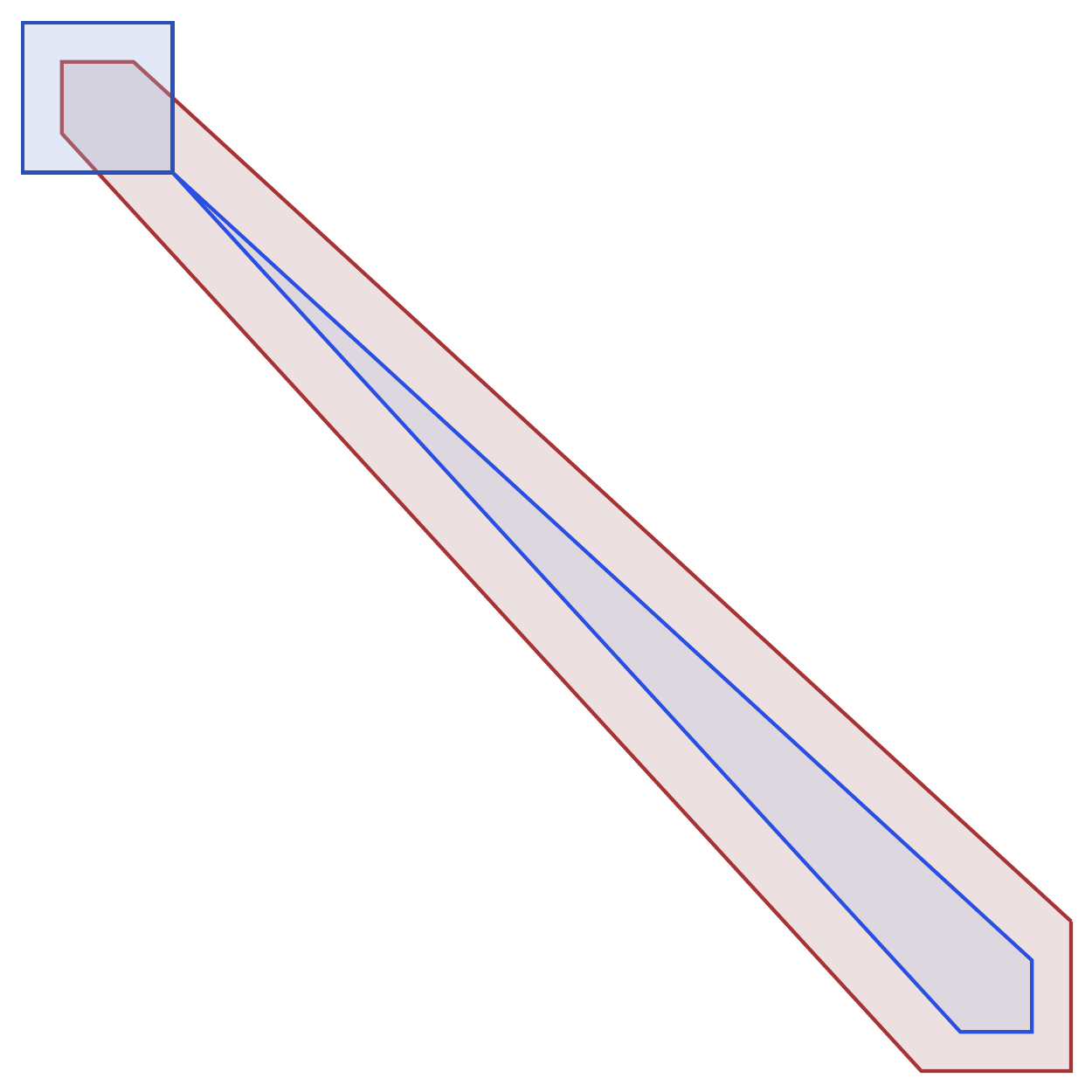}
      \end{overpic}
      \caption{}
      \label{fig:counterexample_4}
    \end{subfigure}%
    \caption{Convex intervals whose characteristic modules illustrate that the bound in \cref{thm:stability} is tight. Figures (a) and (b) illustrate the construction of the example for arbitrary $\delta$---see \cref{sec:tightness}. Figures (c) and (d) show examples of the full construction for large $\delta$ and small $\delta$, respectively.}
     \label{fig:counterexample}
 \end{figure}

 \section*{Funding Declaration}
TN was supported by NSF grants DMS--2324962 and CIF--2526630. LZ was supported by an AMS-Simons Travel Grant.

\section*{Author Contribution}
The Hausdorff stability result was obtained by AS, while the tightness of the bound was proved by TN. The technical lemmas and propositions were developed jointly by AS and TN. The coauthors LZ and MA, who joined the project at a later stage, contributed through helpful discussions, careful reading and editing of the manuscript, examples, remarks, and critical feedback on technical issues.

\bibliography{bibliography}

\end{document}